\def\showFigure#1{#1}
\newtheorem{theorem}{Theorem}[section]
\newtheorem{lemma}[theorem]{Lemma}
\newtheorem{proposition}[theorem]{Proposition}
\theoremstyle{definition}
\newtheorem{remark}[theorem]{Remark}
\newtheorem{algo}{Algorithm}
\numberwithin{equation}{section}
\newcommand{\comment}[1]{}
\def\fddto{\stackrel{f.d.d.}\weakto}
\newcommand{\ind}{{\bf 1}}
\def\inddd#1{{\ind}_{\left\{#1\right\}}}
\newcommand{\proba}{\mathbb P}
\renewcommand{\P}{\mathbb P}
\newcommand{\esp}{{\mathbb E}}
\newcommand{\inv}{^{-1}}
\newcommand{\cov}{{\rm{Cov}}}
\newcommand{\var}{{\rm{Var}}}
\newcommand{\eqnh}{\begin{eqnarray*}}
\newcommand{\eqne}{\end{eqnarray*}}
\newcommand{\eqnhn}{\begin{eqnarray}}
\newcommand{\eqnen}{\end{eqnarray}}
\newcommand{\equh}{\begin{equation}}
\newcommand{\eque}{\end{equation}}
\def\summ#1#2#3{\sum_{#1 = #2}^{#3}}
\def\sif#1#2{\sum_{#1=#2}^\infty}
\newcommand{\eqd}{\stackrel{d}{=}}
\def\topp#1{^{(#1)}}
\def\nn#1{{\left\|#1\right\|}}
\def\abs#1{\left|#1\right|}
\def\ccbb#1{\left\{#1\right\}}
\def\pp#1{\left(#1\right)}
\def\bb#1{\left[#1\right]}
\def\mmod{\ {\rm mod }\ }
\def\aa#1{\left\langle #1\right\rangle}
\def\vv#1{{\boldsymbol #1}}
\def\vvs{{\vv s}}
\def\vvt{{\vv t}}
\def\vvx{{\vv x}}
\def\mand{\mbox{ and }}
\def\qmand{\quad\mbox{ and }\quad}
\def\mwith{\mbox{ with }}
\def\qmwith{\quad\mbox{ with }\quad}
\def\mfa{\mbox{ for all }}
\def\mmas{\mbox{ as }}
\def\wt#1{\widetilde{#1}}
\def\wb#1{\overline{#1}}
\def\PPP{{\rm PPP}}
\def\weakto{\Rightarrow}
\def\R{{\mathbb R}}
\def\Rd{{\mathbb R^d}}
\def\N{{\mathbb N}}
\def\B{{\mathbb B}}
\def\E{{\mathbb E}}
\def\G{{\mathbb G}}
\def\S{{\mathbb S}}
\def\calA{\mathcal A}
\def\calB{\mathcal B}
\def\calE{\mathcal E}
\def\calF{\mathcal F}
\def\calN{\mathcal N}
\def\calS{\mathcal S}
\def\dbb#1{\left\ldbrack#1\right\rdbrack}
\def\dd{{\mathsf d}}
\title[Simulations for Karlin random fields]{Simulations for Karlin random fields}
\author{Zuopeng Fu}\address{Zuopeng Fu\\Department of Mathematical Sciences\\University of Cincinnati\\2815 Commons Way\\Cincinnati, OH, 45221-0025, USA.}\email{fuzg@mail.uc.edu}
\author{Yizao Wang}\address{Yizao Wang\\Department of Mathematical Sciences\\University of Cincinnati\\2815 Commons Way\\Cincinnati, OH, 45221-0025, USA.}\email{yizao.wang@uc.edu}
\begin{document}\sloppy
\begin{abstract}
We investigate the simulation methods for a large family of stable random fields that appeared in the recent literature, known as the Karlin stable set-indexed processes.  We exploit a new representation and implement the procedure introduced by \citet{asmussen01approximations} by first decomposing the random fields into large-jump and small-jump parts, and simulating each part separately.  
As special cases, simulations for several manifold-indexed processes are considered, and adjustments are introduced accordingly in order to improve the computational efficiency. 
\end{abstract}
\keywords{L\'evy--Chentsov stable field, set-indexed process, simulation, stable process, infinitely-divisible process}
\subjclass[2010]{60G22, %fractional processes
60G52;
%stable processes
  Secondary, %60G57. %random measure
60G60} %random fields
%60G18, %self-similar processes
% 60H05} %stochastic integrals

\date{\today}
\maketitle
%\tableofcontents
\section{Introduction}
This paper is a continuation of our earlier work on Karlin stable set-indexed processes in \citep{fu20stable}.
In the most general framework, a Karlin stable set-indexed process is associated to a measure space $(E,\calE,\mu)$ with a $\sigma$-finite measure $\mu$ and an index set $\calA\subset\calE$ such that for each $A\in\calA$, $\mu(A)<\infty$. Fix $(E,\calE,\mu)$ and $\calA$. Then, the  corresponding Karlin stable set-indexed process, denoted by $Y_{\alpha,\beta}$ for $\alpha\in(0,2]$ and $\beta\in(0,1)$, is defined via the following stochastic-integral representation \citep[Remark 3.2]{fu20stable} 
\equh\label{eq:xi_integral}
\ccbb{Y_{\alpha,\beta}(A)}_{A\in\calA}\eqd \ccbb{\int_{\R_+\times\Omega'}\inddd{[\calN'^{(r)}(\omega')](A) \ \rm odd}M_\alpha(drd\omega')}_{A\in\calA},
\eque
where $(\Omega',\calF',\proba')$ is another probability space, on which $\calN'^{(r)}$ is a Poisson point process on $(E,\calE)$ with intensity measure $r\mu$, $r>0$,  $M_\alpha$ is an S$\alpha$S random measure on $\R_+\times\Omega'$ with control measure $c_\beta r^{-\beta-1}drd\proba'$, and
\[
c_\beta := \frac{\beta 2^{1-\beta}}{\Gamma(1-\beta)}.
\]
We shall refer to a Karlin stable set-indexed process as a Karlin random field in short from time to time, and its law is throughout understood in their finite-dimensional distributions (so is the notation `$\eqd$').
The constant $c_\beta$ is chosen so that 
$\esp\exp\pp{i\theta Y_{\alpha,\beta}(A)} 
= \exp\pp{-\mu^\beta(A)|\theta|^\alpha}, \alpha\in(0,2)$. 
Recent developments on the Karlin random fields include \citep{durieu16infinite,durieu20infinite}, based on the original work of \citet{karlin67central}. The Karlin model \citep{karlin67central} is an infinite urn model that plays a fundamental role in combinatorial stochastic processes \citep{pitman06combinatorial,gnedin07notes}.

The abstract representation \eqref{eq:xi_integral} of Karlin random fields provides a stochastic-integral representation for set-indexed fractional Brownian motions ($\alpha=2$, see Lemma \ref{lem:sifBm} below) \citep{herbin06set} and hence extends set-indexed fractional Brownian motions to stable cases. It has a few notable manifold-indexed examples  as summarized below.
 When $\alpha=2$,  these are well-investigated centered Gaussian random fields, with the covariance functions recalled respectively. 
\begin{enumerate}[(i)]
\item Karlin stable processes, with 
\[
(E,\calE,\mu) = (\R_+,\calB(\R_+),{\rm Leb}), \mand \{A_t\}_{t\ge0} = \{[0,t]\}_{t\ge0}.
\]
When $\alpha=2$, these are fractional Brownian motions with Hurst index $\beta/2\in(0,1)$, with covariance function
\equh\label{eq:cov_fBm}
\frac12\pp{s^\beta + t^\beta - |s-t|^\beta}, s,t\ge 0.
\eque
\item
Multiparameter fractional stable fields, with
\[
(E,\calE,\mu) = (\R_+^2,\calB(\R_+^2),{\rm Leb}), \mand \ccbb{A_\vvt} _{\vvt\in\R_+^2} = \ccbb{[\vv0,\vvt]}_{\vvt\in\R_+^2}.
\]
When $\alpha=2$, these are multiparameter fractional Brownian motions introduced in \citep{herbin07multiparameter}, with covariance function \equh\label{eq:cov_mfBm}
\frac12\pp{{\rm Leb}([\vv0,\vvs])^\beta+{\rm Leb}([\vv0,\vvt])^\beta - {\rm Leb}([\vv0,\vvs]\Delta[\vv0,\vvt])^\beta}, \vvs,\vvt\ge 0.
\eque
We write $[\vv a,\vv b] = [a_1,b_1]\times[a_2,b_2]$ for $\vv a = (a_1,a_2),\vv b=(b_1,b_2)\in\R_+^2$.

\item Fractional L\'evy--Chentsov stable fields, with
\[
(E,\calE,\mu) = \pp{\S^1\times \R_+,\calB(\S^1\times\R_+),d\vv sdr},
\]
where $d\vv sdr$ is the product measure of the uniform measure $d\vv s$ on $\S^1$ and the Lebesgue measure $dr$ on $\R_+$, 
and\[
A_\vvt = \ccbb{(\vvs, r): \vvs \in \S^1, 0 < r < \aa{\vvs, \vvt}}, \vvt\in\R^2.
\]
This family and the one in the next examples extend the well-known L\'evy--Chentsov stable fields \citep{samorodnitsky94stable,takenaka10stable}. 
With $\alpha=2$, these are  the fractional L\'evy Brownian fields with Hurst index $\beta/2\in(0,1)$ \citep{samorodnitsky94stable}, with covariance function 
\equh\label{eq:cov_fLBf}
\frac12\pp{\nn{\vvs}^\beta_2+\nn\vvt^\beta_2 - \nn{\vvt-\vvs}_2^\beta}, \vvs,\vvt\in\R^2.
\eque
\item 
Spherical fractional L\'evy--Chenstov stable fields, with
\[
(E,\calE,\mu) = (\S^2,\calB(\S^2),d \vvs),
\]
where $d\vvs$ is the Lebesgue measure on the unit sphere $\S^2$ in $\R^3$, and
\[
A_{\vv x}=H_{\vv x} \triangle H_{\vv o}, \vv x \in \S^2 \qmwith H_{\vv x}: = \ccbb{\vv y \in \S^2: \langle \vv x, \vv y\rangle > 0},
\]
where $\vv o\in \S^2$ is an arbitrary fixed point. When $\alpha=2$, these are spherical fractional Brownian motions  with Hurst index $\beta/2\in(0,1)$ \citep{istas05spherical}, with covariance function ($\mathsf d_{\S^2}$ is the geodesic metric on $\S^2$)
\[%\equh\label{eq:cov_sfBm}
\frac12\pp{\mathsf d_{\S^2}^\beta(\vv o,\vvx)+\mathsf d_{\S^2}^\beta(\vv o,\vv y) - \mathsf d_{\S^2}^\beta(\vvx,\vv y)}, \vvx,\vv y\in\S^2.
\]%\eque
\end{enumerate}

In this paper we investigate the corresponding simulation methods. 
Simulation methods for Gaussian random fields have been extensively studied in theory and broadly applied in various fields (see e.g.~\citep{bierme19introduction,cohen13fractional,kroese15spatial} for overviews, and \citep{gelbaum14simulation,vanwyk15power} for some recent attempts for models with more general manifold index sets). As for stable processes and more generally infinitely-divisible processes, the foundation of simulation methods has been laid down in the seminal work of \citet{asmussen01approximations}. They focused on L\'evy processes in the original paper, but essentially the same idea applies to more general stable processes and infinitely-divisible processes, carried out in details by  Lacaux and coauthors later \citep{lacaux04series,lacaux04real,cohen08general}. These references served as our starting point. 
Namely, it has been well understood since then that in order to simulate an infinitely-divisible process, in practice one should first decompose the process into two independent components consisting of large and small jumps respectively, and then simulate each part separately. We shall follow the same idea here for the Karlin random fields (see Remark \ref{rem:Lacaux} for subtile differences between our framework and aforementioned ones), and the two parts are referred to as the large-jump and small-jump parts, respectively. 

The main contribution of this paper is two-folded. 
\begin{enumerate}[(a)]
\item First, we develop a new representation for Karlin random fields, {\em when restricted to a bounded domain}: that is, the index set $\calA_0$ is such that there exists $E_0\in\calE$ with $\mu(E_0)<\infty$ and for all $A\in\calA_0$, $A\subset E_0$ (Theorem \ref{thm:1}). All the examples mentioned above, when simulated over a bounded domain, can be reduced to such a situation and hence the new representation applies.
The advantage of this new representation is that it provides a compound--Poisson representation for the large-jump part in the Asmussen--Rosi\'nski approach, and hence yields immediately an exact and straightforward simulation method for this part. This is in contrast to the developments in \citep{lacaux04series,lacaux04real,cohen08general}, where for most interesting examples the simulations for the large-jump part require approximation methods. 

\item We then apply the new representation to the aforementioned examples, and  propose adjustments accordingly in order to improve computational efficiency. Most notably, a straightforward implementation of the Asmussen--Rosi\'nski approach would meet computational issues even in the simplest case of $\R_+$-indexed Karlin stable processes. The issues are due to the fact that the new representation is essentially based on the so-called {\em odd-occupancy vector}, the law of which is determined by the $\beta$-Sibuya distribution (of which the tail is regularly varying with index $-\beta$, $\beta\in(0,1)$). Sampling directly from the heavy-tailed Sibuya distribution is very inefficient in practice, and in a couple situations we managed to propose a computational efficient method to sample the odd-occupancy vector directly without sampling the Sibuya distribution. 
\end{enumerate}
\begin{figure}[ht!]
\showFigure{
\begin{center}
\includegraphics[width=.9\textwidth]{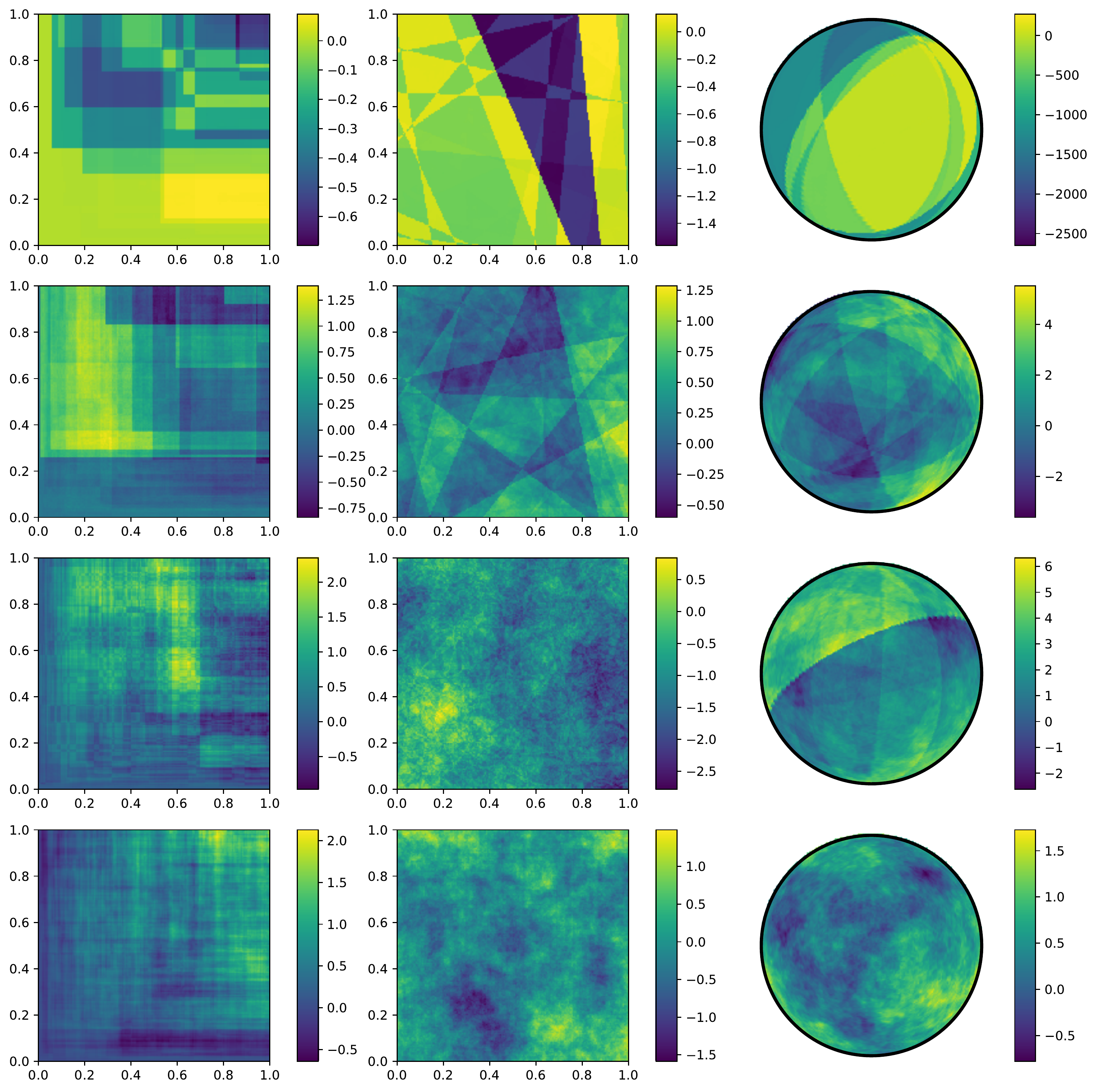}
\end{center}
}
 \caption{Simulations for  
$\R_+^2$-indexed multiparameter fractional stable fields (left),
 $\R^2$-indexed fractional L\'evy--Chentsov stable fields (middle) and $\mathbb S^2$-indexed fractional L\'evy--Chentsov stable fields (right), with $\alpha = 0.5$ (top) $\alpha = 1.2$ (second row), $\alpha = 1.8$ (third row) and $\alpha = 2$ (bottom, Gaussian), and all with $\beta = 0.8$. The Gaussian cases correspond to multiparameter fractional Brownian motions, fractional L\'evy Brownian fields, and spherical fractional Brownian motions, respectively. 
 }
 \label{fig:samples} 
\end{figure}

In Figure \ref{fig:samples} we provide a few simulation examples of the processes of our interest. Note that when $\alpha<2$ these are only approximated samplings. 
Curiously, for fractional L\'evy--Chentsov stable fields, the odd-occupancy vectors are functionals of models from stochastic geometry \citep{lantuejoul02geostatistical,schneider08stochastic}, as illustrated in Figures \ref{fig:odd_fLC} and \ref{fig:odd_sfLC} later. So fractional L\'evy--Chentsov stable fields can be thought of aggregations of models from stochastic geometry.

The paper is organized as follows. Section \ref{sec:new} introduces a new  representation for the Karlin random fields, and explains the general strategy for simulations. Section \ref{sec:examples} investigates a few examples and explains how improvement can be made regarding efficiency of the simulations. Appendix  \ref{sec:general} provides a review on the general framework of \citet{asmussen01approximations} applied to stable processes. 

%\newpage
\section{Karlin stable set-indexed processes}\label{sec:new}
\subsection{A new representation}
We develop a new representation of Karlin stable set-indexed processes, when restricted to a bounded domain. More precisely, fix some $E_0\in\calE$ with $\mu(E_0)<\infty$ and consider an index set $\calA_0$ such that $A\subset E_0$ for all $A\in\calA_0$.  
We let $Q_\beta$ be a random variable with the Sibuya distribution with parameter $\beta\in(0,1)$, determined by $\esp z^{Q_\beta} = 1-(1-z)^\beta$ for all $z\in[0,1]$ \citep{sibuya79generalized}. Equivalently, $Q_\beta$ takes values from $\N$ with
\[%\equh\label{eq:Q_beta}
\P (Q_{\beta} = k) = \frac\beta{\Gamma(1-\beta)}\frac{\Gamma(k-\beta)}{\Gamma(k+1)}
\sim \frac\beta{\Gamma(1-\beta)}k^{-1-\beta}
\mmas k\to\infty,
\]%\eque
so it is a heavy-tailed distribution without finite $\beta$-th moment. Throughout, the following random closed set $R_\beta$ in $\calF_0(E_0)$, the space of non-empty closed subsets of $E_0$ (see \citep{molchanov17theory} for background on random closed sets), plays a fundamental role for the Karlin random fields
\equh\label{eq:R_beta}
R_\beta:=\bigcup_{i=1}^{Q_\beta}\{U_i\},
\eque
where $\{U_i\}_{i\in\N}$ are i.i.d.~random elements from $E_0$ with the law $\mu_{E_0}(\cdot):=\mu(\cdot\cap E_0)/\mu(E_0)$
 independent from $Q_\beta$ introduced before. So $R_\beta$ is a random closed set taking values in $\calF_0(E_0)$. 
%Let $P_{R_\beta}$ denote the induced probability measure on $\calF_0(E_0)$, the space of non-empty closed subsets of $E_0$ 
The new representation is summarized as follows.

\begin{theorem} \label{thm:1}
Assume $E_0$ and $\calA_0$ as above. For all $\alpha\in(0,2]$, $\beta\in(0,1)$, 
the Karlin set-indexed stable process \eqref{eq:xi_integral} restricted to $\calA_0$ has the stochastic-integral representation
\equh\label{eq:integral}
\ccbb{Y_{\alpha,\beta}(A)}_{A\in\calA_0}\eqd\ccbb{\int_{\Omega'}\inddd{|R_\beta'(\omega')\cap A|\ \rm odd}\wt M_\alpha(d\omega')}_{A\in\calA_0},
\eque
where $(\Omega',\calF',\proba')$ is another probability space, on which $R_\beta'(\omega)$ is a random element in $E_0$ with the same law as $R_\beta$, and $\wt M_\alpha$ is an S$\alpha$S random measure on $\Omega'$ with control measure $2^{1-\beta}\mu^\beta(E_0)\cdot\proba'$. 
\end{theorem}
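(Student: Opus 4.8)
The plan is to derive \eqref{eq:integral} directly from \eqref{eq:xi_integral} by showing the two sides have the same finite-dimensional distributions; both are S$\alpha$S processes presented as integrals of $\{0,1\}$-valued functions against an S$\alpha$S random measure, so the comparison will reduce to a computation with their spectral representations. The first step is to localise to $E_0$: since $A\subset E_0$ for every $A\in\calA_0$, the count $[\calN'^{(r)}(\omega')](A)$ depends only on the restriction of $\calN'^{(r)}$ to $E_0$, which we denote $\calN^{(r)}$; this is a Poisson point process on $E_0$ with finite intensity $r\,\mu(\,\cdot\,\cap E_0)$. Conditioning on its total mass $N^{(r)}$, which is Poisson$(r\mu(E_0))$ distributed, the points are, given $N^{(r)}=n$, i.i.d.\ with law $\mu_{E_0}$, so $\calN^{(r)}(A)$ has the law of $\sum_{i=1}^{N^{(r)}}\indd{U_i\in A}$ with the $U_i$ from \eqref{eq:R_beta}. (When $\mu$ has atoms one reads $R_\beta$ in \eqref{eq:R_beta} as the point measure $\sum_i\delta_{U_i}$; for non-atomic $\mu$, which covers every example in the Introduction, the $U_i$ are a.s.\ distinct and this distinction is immaterial.)

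Next I would reduce to a single scalar identity. For an S$\alpha$S random measure $M$ with control measure $m$ and $f_1,\dots,f_k\in L^\alpha(m)$, the joint law of $\pp{\int f_1\,dM,\dots,\int f_k\,dM}$ is determined by the map $(\theta_1,\dots,\theta_k)\mapsto\int\abs{\sum_j\theta_jf_j}^\alpha dm$ \citep{samorodnitsky94stable}: for $\alpha<2$ the joint characteristic function equals $\exp(-\int\abs{\sum_j\theta_jf_j}^\alpha dm)$, while for $\alpha=2$ the law is centered Gaussian and its covariances are recovered from this map by polarisation. Since all integrands in \eqref{eq:xi_integral} and \eqref{eq:integral} are indicators, $\abs{\sum_j\theta_jf_{A_j}}^\alpha$, viewed as a function on $\{0,1\}^k$ vanishing at the origin, is a linear combination of the monomials $\prod_{j\in T}\epsilon_j$ over $\emptyset\ne T\subseteq\{1,\dots,k\}$, and the integral of such a monomial against a control measure is a co-occurrence mass. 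Hence matching the finite-dimensional distributions of the two sides of \eqref{eq:integral} is equivalent to proving, for every finite family $A_1,\dots,A_m\in\calA_0$,
\[
\int_0^\infty c_\beta r^{-\beta-1}\,\P\pp{\calN^{(r)}(A_j)\text{ is odd},\ j=1,\dots,m}\,dr
=2^{1-\beta}\mu^\beta(E_0)\,\P\pp{\abs{R_\beta\cap A_j}\text{ is odd},\ j=1,\dots,m}.
\]

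Finally I would carry out the computation. Put $p_n:=\P\pp{\sum_{i=1}^n\indd{U_i\in A_j}\text{ is odd},\ j=1,\dots,m}$, so that $p_0=0$. Conditioning on $N^{(r)}$ on the left and on $Q_\beta$ on the right, and applying Tonelli (all summands being nonnegative), the left-hand side becomes $\sif n1\frac{p_n\,\mu(E_0)^n}{n!}\int_0^\infty c_\beta r^{n-\beta-1}e^{-r\mu(E_0)}\,dr$ and the right-hand side becomes $2^{1-\beta}\mu^\beta(E_0)\sif n1 p_n\,\P(Q_\beta=n)$. The Gamma integral evaluates to $\int_0^\infty r^{n-\beta-1}e^{-r\mu(E_0)}\,dr=\Gamma(n-\beta)\,\mu(E_0)^{\beta-n}$, so the left-hand side equals $c_\beta\,\mu^\beta(E_0)\sif n1\frac{\Gamma(n-\beta)}{n!}\,p_n$; since $c_\beta=\beta 2^{1-\beta}/\Gamma(1-\beta)$ and $\P(Q_\beta=n)=\frac{\beta}{\Gamma(1-\beta)}\frac{\Gamma(n-\beta)}{\Gamma(n+1)}$, the two sides coincide. (Taking $m=1$ recovers $2^{1-\beta}\mu^\beta(E_0)\,\P(\abs{R_\beta\cap A}\text{ is odd})=\mu^\beta(A)$, consistent with the normalisation of $Y_{\alpha,\beta}$.)

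The computation above is short, and I do not anticipate a genuine obstacle. The one point calling for some care is the passage, in the second step, from ``equal finite-dimensional distributions of the S$\alpha$S process'' to ``equal joint odd-occupancy probabilities'', carried out uniformly over $\alpha\in(0,2]$, including the Gaussian endpoint; the bookkeeping convention for $R_\beta$ when $\mu$ carries atoms is a minor secondary point.
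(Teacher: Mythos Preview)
Your proposal is correct and follows essentially the same route as the paper: localise to $E_0$, condition the Poisson point process on its total number of points, evaluate the Gamma integral $\int_0^\infty r^{k-\beta-1}e^{-r\mu(E_0)}\,dr$ to recover the Sibuya weights $\wt\nu(\{k\})=2^{1-\beta}\mu^\beta(E_0)\,\P(Q_\beta=k)$, and conclude. The only difference is that the paper works directly with the characteristic-function exponent $\int\abs{\sum_j\theta_j f_j}^\alpha\,dm$ and matches it term-by-term in $k$, bypassing your intermediate reduction to joint odd-occupancy probabilities via the monomial basis on $\{0,1\}^k$; that detour is valid but unnecessary, since once the conditional law given $k$ points and the weight $\wt\nu(\{k\})$ are matched, the full expression coincides for all $\theta$ simultaneously.
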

\begin{proof}
We compute the characteristic function of finite-dimensional distributions. For $d\in\N$,  $A_1,\dots,A_d\in\calA_0$ and $\theta_1,\dots,\theta_d\in\R$, we have
\[
\esp \exp \pp{i\summ j1d \theta_jY_{\alpha,\beta}(A_j)} = \exp\pp{-\int_{\R_+\times\Omega'}\abs{\summ j1d \theta_j\inddd{\calN'^{(r)}(A_j)\ \rm odd}}^\alpha c_\beta r^{-\beta-1}d\proba'}.
\]
Note that by the property of Poisson point processes, there exists a measure $\wt\nu$ on $\N$ such that the above is the same as, with $\{U'_i\}_{i\in\N}$ as i.i.d.~random variables with law $\mu_{E_0}$ (defined on some probability space denoted by $(\Omega',\calF',\proba')$ without loss of generality),
\equh\label{eq:1}
\exp\pp{-\sif k1 \int_{\Omega'}\abs{\summ j1d \theta_j\inddd{\abs{\bigcup_{i=1}^k\{U'_i\}\cap A_j}\ \rm odd}}^\alpha\wt\nu(\{k\})d\proba'}.
\eque
The values of $\wt\nu$ can be computed as 
\begin{align*}
\wt\nu(\{k\})& = c_\beta\int_0^\infty r^{-\beta-1}\proba\pp{\calN\topp r(E_0) =k}dr 
 = c_\beta \int_0^\infty r^{-\beta-1}\frac{(r\mu(E_0))^k}{k!}e^{-r\mu(E_0)}dr\\
&=\frac{\beta2^{1-\beta}}{\Gamma(1-\beta)} \cdot\mu^\beta(E_0)\cdot \frac{\Gamma(k-\beta)}{\Gamma(k+1)} = 2^{1-\beta}\mu^\beta(E_0)\proba(Q_\beta = k), \mfa k\in\N.
\end{align*}
Then, \eqref{eq:1} becomes, letting $Q_\beta'$ be a $\beta$-Sibuya random variable on $(\Omega',\calF',\proba')$, independent from $\{U'_i\}_{i\in\N}$,
\begin{multline*}
\exp\pp{-2^{1-\beta}\mu^\beta(E_0)\int_{\Omega'}\abs{\summ j1d \theta_j\inddd{\abs{\bigcup_{i=1}^{Q_\beta'}\{U_i'\}\cap A_j}\ \rm odd}}^\alpha d\proba'}\\
= \exp\pp{-2^{1-\beta}\mu^\beta(E_0)\int_{\Omega'}\abs{\summ j1d \theta_j\inddd{\abs{R_\beta'\cap A_j}\ \rm odd}}^\alpha d\proba'}.
\end{multline*}
This completes the proof.
\end{proof}
\begin{remark}
Let $P_{\calN\topp r,+}$ denote the induced probability measure of $\calN\topp r$ (as a random closed set) restricted to $\calF_0(E_0)$; in particular $P_{\calN\topp r,+}$ is a sub-probability measure for all $r>0$ (i.e.~$P_{\calN\topp r,+}(\calF_0(E_0))<1$).
Let $P_{R_\beta}$ denote the induced probability measure on $\calF_0(E_0)$ by $R_\beta$. We have essentially proved 
\equh\label{eq:law}
P_{R_\beta}(\cdot) = \frac{\beta}{\Gamma(1-\beta)\mu^\beta(E_0)}\int_0^\infty r^{-\beta-1}P_{\calN\topp r,+}(\cdot)dr
\eque
as a probability measure on $\calF_0(E_0)$. 
The right-hand side, in the language of Radon point measures instead of random closed sets,  appeared in \citep[Eq.(3.1)]{fu20stable} as $\mu_\beta(\cdot)/(2^{1-\beta}\mu^\beta(E_0))$ and played a central role in the representations of Karlin random fields therein.
\end{remark}
The integral representations \eqref{eq:integral} with  $\alpha = 2$ corrponds to {\em set-indexed fractional Brownian motions} with Hurst index $H = \beta/2\in(0,1/2)$ \citep{herbin06set}. These are centered Gaussian processes, denoted by $\{\B_\mu^{\beta/2}(A)\}_{A\in\calA_0}$, with
\equh\label{eq:set_indexed}
\cov \pp{\B_\mu^{\beta/2}(A_1), \B_\mu^{\beta/2}(A_2)} =\frac{1}{2} \pp{\mu^{\beta}(A_1) + \mu^{\beta}(A_2) - \mu^{\beta}(A_1 \Delta A_2)}, A_1,A_2\in\calA_0.
\eque
\begin{lemma}\label{lem:sifBm}
Let $Y_{2,\beta}$ be as in \eqref{eq:integral}. Then,
\[
\ccbb{Y_{2,\beta}(A)}_{A\in\calA_0}\eqd \ccbb{\B_\mu^{\beta/2}(A)}_{A\in\calA_0}.
\] 
\end{lemma}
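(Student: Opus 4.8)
The plan is to verify that the centered Gaussian process on the right-hand side of \eqref{eq:integral} with $\alpha = 2$ has the covariance function prescribed in \eqref{eq:set_indexed}, since a centered Gaussian process is determined by its covariance. First I would note that, by Theorem \ref{thm:1} with $\alpha = 2$, the process $\{Y_{2,\beta}(A)\}_{A\in\calA_0}$ is a centered Gaussian process: it is an integral of deterministic integrands against an S$2$S (i.e.~Gaussian) random measure $\wt M_2$ with control measure $2^{1-\beta}\mu^\beta(E_0)\cdot\proba'$. For such a Gaussian stochastic integral, the covariance is
\[
\cov\pp{Y_{2,\beta}(A_1), Y_{2,\beta}(A_2)} = 2^{1-\beta}\mu^\beta(E_0)\int_{\Omega'} \inddd{|R_\beta'\cap A_1|\ \rm odd}\inddd{|R_\beta'\cap A_2|\ \rm odd}\,d\proba'.
\]
(One may also read this constant off the characteristic function at the end of the proof of Theorem \ref{thm:1}, specialized to $\alpha = 2$ and $d = 1$, which gives $\var(Y_{2,\beta}(A)) = 2^{1-\beta}\mu^\beta(E_0)\,\proba(|R_\beta'\cap A|\text{ odd})$.)

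The remaining task is purely a computation of the probability
\[
p(A_1,A_2) := \proba\pp{|R_\beta'\cap A_1|\ \rm odd,\ |R_\beta'\cap A_2|\ \rm odd}
\]
and the two marginal versions $p(A_i) := \proba(|R_\beta'\cap A_i|\ \rm odd)$, and then checking the algebraic identity
\[
2^{1-\beta}\mu^\beta(E_0)\,p(A_1,A_2) = \tfrac12\pp{\mu^\beta(A_1) + \mu^\beta(A_2) - \mu^\beta(A_1\triangle A_2)}.
\]
For the marginals, conditioning on $Q_\beta = k$ and using that the $U_i'$ are i.i.d.~$\mu_{E_0}$, the number of points of $R_\beta'$ falling in $A$ is Binomial$(k, \mu(A)/\mu(E_0))$, so $\proba(|R_\beta'\cap A|\ \rm odd \mid Q_\beta = k) = \tfrac12(1 - (1-2\mu(A)/\mu(E_0))^k)$; averaging over the Sibuya law via the probability generating function $\esp z^{Q_\beta} = 1-(1-z)^\beta$ with $z = 1 - 2\mu(A)/\mu(E_0)$ yields $p(A) = \tfrac12(2\mu(A)/\mu(E_0))^\beta = 2^{\beta-1}\mu^\beta(A)/\mu^\beta(E_0)$, which already matches the diagonal of \eqref{eq:set_indexed}. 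For the joint probability I would partition $E_0$ into the four disjoint pieces $A_1\cap A_2$, $A_1\setminus A_2$, $A_2\setminus A_1$, $E_0\setminus(A_1\cup A_2)$, write $R_\beta'$'s intersection counts with these pieces as a Multinomial vector given $Q_\beta = k$, and express the indicator that both $|R_\beta'\cap A_1|$ and $|R_\beta'\cap A_2|$ are odd as a $\pm1$-combination of products of the form $\prod(\pm1)^{(\text{count in piece})}$; averaging each resulting term over $Q_\beta$ again through the Sibuya generating function turns every term into a power $(\cdot)^\beta$, and collecting terms gives $p(A_1,A_2)$ in terms of $\mu^\beta$ of $A_1$, $A_2$, $A_1\triangle A_2$ and $E_0$. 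Plugging this into the covariance formula above, the $\mu^\beta(E_0)$ factors cancel and the claimed identity drops out; by polarization (or directly, using $|R_\beta'\cap(A_1\triangle A_2)|$ odd $\iff$ exactly one of $|R_\beta'\cap A_1|, |R_\beta'\cap A_2|$ is odd) one gets $p(A_1,A_2) = \tfrac12(p(A_1)+p(A_2)-p(A_1\triangle A_2))$ with $p(B) = 2^{\beta-1}\mu^\beta(B)/\mu^\beta(E_0)$, which is exactly \eqref{eq:set_indexed} after multiplying by $2^{1-\beta}\mu^\beta(E_0)$.

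The only real subtlety — and the step I expect to need the most care — is the bookkeeping in the multinomial/inclusion–exclusion computation of $p(A_1,A_2)$: one must correctly write the event $\{|R_\beta'\cap A_1|$ odd$\} \cap \{|R_\beta'\cap A_2|$ odd$\}$ as a linear combination of expectations of $z^{Q_\beta}$ at the four points $z \in \{1,\ 1-2a_1,\ 1-2a_2,\ 1-2a_{12}\}$ where $a_1 = \mu(A_1)/\mu(E_0)$ etc., keeping track of signs, and then simplify $1 - (2a_1)^\beta - \cdots$ back into $\mu^\beta$ terms. Everything else is routine: the Gaussianity and the covariance extraction are immediate from Theorem \ref{thm:1}, and the reduction to matching a covariance is the standard fact that centered Gaussian finite-dimensional distributions are determined by covariances. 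An alternative, slightly slicker route that I might use instead is to invoke \eqref{eq:law} to recognize $p(A_1,A_2)$ directly as the corresponding quantity already computed in \citep{fu20stable}, but the self-contained Sibuya-generating-function computation is short enough to include.
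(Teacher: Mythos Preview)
Your proposal is correct and structurally parallel to the paper's proof: both reduce to the covariance identity
\[
\cov\pp{Y_{2,\beta}(A_1),Y_{2,\beta}(A_2)} = 2^{1-\beta}\mu^\beta(E_0)\,p(A_1,A_2),
\]
and both exploit the parity identity that $|R\cap(A_1\triangle A_2)|$ is odd iff exactly one of $|R\cap A_1|,|R\cap A_2|$ is odd, yielding $p(A_1,A_2)=\tfrac12(p(A_1)+p(A_2)-p(A_1\triangle A_2))$. The difference lies in how $p(A)$ is computed. You work directly with the $Q_\beta$-representation \eqref{eq:R_beta} and the Sibuya generating function $\esp z^{Q_\beta}=1-(1-z)^\beta$ (evaluated at $z=1-2\mu(A)/\mu(E_0)$, which may require the easy observation that the pgf identity extends from $[0,1]$ to $[-1,1]$). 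The paper deliberately avoids $Q_\beta$ and instead invokes the mixture identity \eqref{eq:law} to pass to Poisson point processes $\calN\topp r$, applies the parity identity there, and then integrates $\tfrac12(1-e^{-2\mu(A)r})$ against $\beta r^{-\beta-1}dr$ using the Gamma integral. Your route is slightly more self-contained (it needs only the pgf, not \eqref{eq:law}); the paper's route makes the connection to the Poisson layer explicit and reuses the computation \eqref{eq:law1} later. Amusingly, each side names the other's method as the alternative: you mention \eqref{eq:law} as a ``slicker route,'' while the paper states it is using \eqref{eq:law} ``instead of using the representation \eqref{eq:R_beta} involving $Q_\beta$.'' Your multinomial/inclusion--exclusion bookkeeping is unnecessary once you have the parity identity for $R_\beta$ directly, so you can drop that paragraph.
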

A stronger result, including a decomposition of set-indexed fractional Brownian motions, was already proved in \citep[Section 3.3]{fu20stable}. We include a quick proof for a weaker result here, and we shall need the computation \eqref{eq:law1} below later.  
\begin{remark}%\label{rem:convention}
Note that our covariance formula differs from the one in \citep[Section 3.3]{fu20stable} by a factor of 2. This is because therein for a streamlined presentation we took the convention that the characteristic function for a stochastic integral is $\esp \exp(i\theta\int_S fdM_\alpha) = \exp\pp{-|\theta|^\alpha\int_S|f|^\alpha d\mu}$ {\em for all $\alpha\in(0,2]$}. With $\alpha=2$ this  is different from the common convention (considered above) under which the characteristic function is $\exp\pp{-(1/2)|\theta|^2\int_S|f|^2d\mu}$ instead (e.g.~\citep[Remark 2.9]{fu20stable}).
\end{remark}

\begin{proof}[Proof of Lemma \ref{lem:sifBm}]
We compute
\begin{align*}
\cov\pp{Y_{2,\beta}(A_1),Y_{2,\beta}(A_2)} = 2^{1-\beta}\mu^\beta(E_0)\cdot \proba\pp{R_\beta(A_1)\ {\rm odd}, R_\beta(A_2)\ {\rm odd}}.
\end{align*}
We shall use the identity \eqref{eq:law} instead of using the representation \eqref{eq:R_beta} involving $Q_\beta$. Namely, 
\begin{multline}\label{eq:law1}
\proba\pp{|R_\beta \cap A_1|\ {\rm odd}, |R_\beta \cap A_2|\ {\rm odd}} \\=\frac{\beta}{\Gamma(1-\beta)\mu^\beta(E_0)}\int_0^\infty r^{-\beta-1}\proba\pp{\calN\topp r(A_1)\ {\rm odd}, \calN\topp r(A_2)\ {\rm odd}}dr.
\end{multline}
We first compute the probability in the integrand. By discussing the even/odd cardinalities of $A_1\setminus A_2, A_2\setminus A_1, A_1\cap A_2$, we see that it is the same as
\begin{multline*}
\proba\pp{\calN\topp r(A_1)\ {\rm odd}, \calN\topp r(A_2)\ {\rm odd}} \\ =
\frac12\bb{\proba\pp{\calN\topp r(A_1) \ {\rm odd}} + \proba\pp{\calN\topp r(A_2)\ {\rm odd}} - \proba\pp{\calN\topp r(A_1\Delta A_2)\ {\rm odd}}}.
\end{multline*}
So \eqref{eq:law1} becomes
\[
\frac{\beta}{\Gamma(1-\beta)\mu^\beta(E_0)}\int_0^\infty \frac{r^{-\beta-1}}4\pp{1-e^{-2\mu(A_1)r}+ 1-e^{-2\mu(A_2)r}-1+e^{-2\mu(A_1\Delta A_2)r}}dr.
\]
With $\int_0^\infty \beta r^{-\beta-1}(1-e^{-ar})dr = a^\beta \Gamma(1-\beta)$ for $a>0$, the above becomes then
\begin{multline}\label{eq:A1A2}
\proba\pp{|R_\beta \cap A_1|\ {\rm odd}, |R_\beta \cap A_2|\ {\rm odd}} \\
= \frac1{2^{1-\beta}\mu^\beta(E_0)}\cdot \frac{1}{2} \pp{\mu^{\beta}(A_1) + \mu^{\beta}(A_2) - \mu^{\beta}(A_1 \Delta A_2)}.
\end{multline}
We now see that $Y_{2,\beta}$ and $\B^{\beta/2}_\mu$ share the same covariance function. This completes the proof.
\end{proof}

When restricted to $\alpha\in(0,2)$ the Karlin random field with representation \eqref{eq:integral} has the following series representation (see \citep[Theorem 3.10.1]{samorodnitsky94stable}), 
\equh\label{SetKarlin2}
\ccbb{Y_{\alpha,\beta}(A)}_{A \in \calA_0} \eqd \ccbb{\sum_{j \in \N} \eta_{\alpha,j}\inddd{\abs{R_{\beta, j} \cap A}\ \rm odd}}_{A \in \calA_0},
\eque
where $\{\eta_{\alpha,j}\}_{j\in\N}$ are enumerations of a Poisson point process on $\wb\R\setminus\{0\}$ with intensity 
\[
2^{1-\beta}\mu^\beta(E_0)\cdot\frac{\alpha C_\alpha}2|x|^{-\alpha-1}, x\ne0,
\]
with
\[%\equh\label{eq:Cc}
C_\alpha := \pp{\int_0^\infty x^{-\alpha-1}\sin xdx}\inv, \alpha\in(0,2),
\]%\eque
and $\{R_{\beta,j}\}_{j\in\N}$ are i.i.d.~copies of $R_\beta$, independent from $\{\eta_{\alpha,j}\}_{j\in\N}$.

\subsection{A general simulation framework}
The framework of \citet{asmussen01approximations} applies to $\{Y_{\alpha,\beta}(A)\}_{A\in\calA_0}$ as follows. Take the random series on the right-hand side of \eqref{SetKarlin2} as the definition of $Y_{\alpha,\beta}(A)$. Then given $\epsilon>0$, we write
\[
Y_{\alpha,\beta} (A) = Y_{\alpha,\beta}^{\epsilon, 1} (A) + Y_{\alpha,\beta}^{\epsilon, 2} (A)
\]
as the sum of the large-jump and the small-jump parts of the original process given by
\begin{align*}
Y_{\alpha,\beta}^{\epsilon,1}(A)& :=\sum_{j \in \N} \eta_{\alpha,j}\inddd{\abs{R_{\beta, j} \cap A}\ \rm odd}\inddd{\eta_{\alpha,j}>\epsilon},\\
Y_{\alpha,\beta}^{\epsilon,2}(A)& :=\sum_{j \in \N} \eta_{\alpha,j}\inddd{\abs{R_{\beta, j} \cap A}\ \rm odd}\inddd{\eta_{\alpha,j}\le \epsilon}, A\in\calA_0,
\end{align*}
respectively.
The large-jump part has a compound-Poisson representation
\equh \label{eq:large-jump}
\ccbb{Y_{\alpha,\beta}^{\epsilon, 1} (A)}_{A\in\calA_0} \eqd \ccbb{\sum_{j =1}^{N_{\alpha,\epsilon}}  V_{\alpha,\epsilon,j} D_{j,A}}_{A\in\calA_0} \mwith D_{j,A}: = \inddd{\abs{R_{\beta,j} \cap A}\ \rm odd}, A\in \calA_0,
\eque
where $N_{\alpha,\epsilon}$ is a Poisson random variable with parameter $2^{1-\beta}\mu^\beta(E_0)C_\alpha\epsilon^{-\alpha}$ and $\{V_{\alpha,\epsilon,j}\}_{j \in \N}$ are i.i.d.~symmetric random variables with probability density function $\epsilon^\alpha (\alpha/2)|y|^{-\alpha-1}\inddd{|y|>\epsilon}$, $\{R_{\beta,j}\}_{j\in\N}$ are i.i.d.~copies of $R_\beta$, and all random variables are independent. 

For the small-jump part,
one can show the following.
\begin{proposition}\label{prop:small1}With the notations above,
\[
\ccbb{\frac{Y_{\alpha,\beta}^{\epsilon, 2} (A)}{\sigma_\alpha(\epsilon)}}_{A \in \calA_0} \stackrel{f.d.d.}\weakto \ccbb{\B_\mu^{\beta/2}(A)}_{A \in \calA_0},
\]
as $\epsilon\downarrow0$,
where $\{\B_\mu^{\beta/2}(A)\}_{A \in \calA_0}$ is the set-indexed fractional Brownian motion \citep{herbin06set} with the covariance function  \eqref{eq:set_indexed}.
\end{proposition}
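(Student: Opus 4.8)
The plan is to establish the convergence by checking convergence of the characteristic functions of all finite-dimensional distributions, since the limit is Gaussian and Gaussian laws are determined by their characteristic functions. Fix $d\in\N$, sets $A_1,\dots,A_d\in\calA_0$ and reals $\theta_1,\dots,\theta_d\in\R$. Write $f_\omega := \summ j1d \theta_j\inddd{\abs{R_\beta'(\omega')\cap A_j}\ \rm odd}$ for the relevant linear combination of the odd-occupancy indicators, a bounded $\calF'$-measurable function of $\omega'$. Using the series representation \eqref{SetKarlin2}, the small-jump part $\summ j1d \theta_j Y_{\alpha,\beta}^{\epsilon,2}(A_j)$ is an integral of $f$ against the restriction of the $\alpha$-stable random measure $\wt M_\alpha$ to jumps of size at most $\epsilon$; equivalently it is a centered infinitely-divisible random variable whose Lévy measure is the image, under $y\mapsto y f_{\omega'}$, of the product of $\proba'$ with the truncated symmetric measure $2^{1-\beta}\mu^\beta(E_0)(\alpha C_\alpha/2)|y|^{-\alpha-1}\inddd{|y|\le\epsilon}\,dy$. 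I would then compute its variance $\sigma_\alpha^2(\epsilon) \esp f_{\omega'}^2 = \esp f_{\omega'}^2 \cdot 2^{1-\beta}\mu^\beta(E_0) C_\alpha \int_{|y|\le\epsilon}|y|^{1-\alpha}\,dy$, which up to the deterministic constant $\esp f_{\omega'}^2$ is exactly the variance normalization $\sigma_\alpha^2(\epsilon)$ appearing in the Asmussen--Rosiński scheme (this is where Appendix \ref{sec:general} supplies the precise definition of $\sigma_\alpha(\epsilon)$).

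The core of the argument is then the standard Asmussen--Rosiński Gaussian-approximation criterion: a sequence of centered infinitely-divisible laws with Lévy measures $\nu_\epsilon$ supported on $[-\epsilon,\epsilon]$ (in the jump variable) and variances $\sigma^2(\epsilon)$, after rescaling by $\sigma(\epsilon)$, converges to a Gaussian law provided $\sigma(\epsilon)/\epsilon\to\infty$ as $\epsilon\downarrow0$ — that is, provided no single jump dominates the variance. Here $\sigma_\alpha(\epsilon)$ is of order $\epsilon^{1-\alpha/2}$ (for $\alpha<2$), so $\sigma_\alpha(\epsilon)/\epsilon\to\infty$, and the criterion applies to each one-dimensional projection $\summ j1d \theta_j Y_{\alpha,\beta}^{\epsilon,2}(A_j)$. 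This gives, for every choice of $\theta_1,\dots,\theta_d$, convergence of $\summ j1d\theta_j Y_{\alpha,\beta}^{\epsilon,2}(A_j)/\sigma_\alpha(\epsilon)$ to a centered Gaussian with variance $\esp f_{\omega'}^2$. Hence the finite-dimensional distributions of $Y_{\alpha,\beta}^{\epsilon,2}/\sigma_\alpha(\epsilon)$ converge to a centered Gaussian field whose covariance is
\[
\esp\pp{\inddd{\abs{R_\beta'\cap A_1}\ \rm odd}\inddd{\abs{R_\beta'\cap A_2}\ \rm odd}} = \proba\pp{|R_\beta\cap A_1|\ {\rm odd},\ |R_\beta\cap A_2|\ {\rm odd}}.
\]

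The final step is purely an identification of the limiting covariance: by \eqref{eq:A1A2} in the proof of Lemma \ref{lem:sifBm}, this probability equals $\frac{1}{2^{1-\beta}\mu^\beta(E_0)}\cdot\frac12(\mu^\beta(A_1)+\mu^\beta(A_2)-\mu^\beta(A_1\Delta A_2))$. Since the control measure of $\wt M_\alpha$ carries the factor $2^{1-\beta}\mu^\beta(E_0)$, which is absorbed into $\sigma_\alpha(\epsilon)^2$, the normalization is exactly chosen so that the surviving covariance is $\frac12(\mu^\beta(A_1)+\mu^\beta(A_2)-\mu^\beta(A_1\Delta A_2))$, i.e.\ the covariance \eqref{eq:set_indexed} of $\B_\mu^{\beta/2}$; this also explains why the Gaussian case $\alpha=2$ of Lemma \ref{lem:sifBm} reuses the same computation. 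I expect the main obstacle to be a careful bookkeeping issue rather than a conceptual one: one must confirm that the conditioning structure (first sampling $\omega'$, i.e.\ the random sets $R_{\beta,j}$, then the jump sizes) is compatible with directly invoking the scalar Asmussen--Rosiński criterion — that is, that the small-jump part is genuinely an infinitely-divisible random variable with the claimed Lévy measure and not merely a conditionally infinitely-divisible one — and that the normalizing constant $\sigma_\alpha(\epsilon)$ here matches the one defined in Appendix \ref{sec:general} up to the deterministic factor $\esp f_{\omega'}^2$. Once the Lévy-measure identity is pinned down, the convergence is immediate from the cited criterion.
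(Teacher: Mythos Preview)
Your proposal is correct and follows essentially the same route as the paper: the paper's proof is simply ``the result follows from Proposition~\ref{prop:small} and Lemma~\ref{lem:sifBm}'', and your argument unpacks exactly those two ingredients --- the Asmussen--Rosi\'nski Gaussian-approximation criterion for the small-jump part (Proposition~\ref{prop:small}) and the covariance identification via \eqref{eq:A1A2} (Lemma~\ref{lem:sifBm}). One minor bookkeeping correction: in the paper's convention $\sigma_\alpha(\epsilon)$ does \emph{not} absorb the factor $2^{1-\beta}\mu^\beta(E_0)$; rather, that factor sits in the control measure $m$ and therefore appears in the limiting covariance $\int_{\Omega'} f_{A_1}f_{A_2}\,dm = 2^{1-\beta}\mu^\beta(E_0)\cdot\proba(|R_\beta\cap A_1|\ {\rm odd},\ |R_\beta\cap A_2|\ {\rm odd})$, where it cancels the prefactor in \eqref{eq:A1A2} --- but you already flagged this as the bookkeeping step to verify.
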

\begin{proof}
The result follows from Proposition \ref{prop:small}  and Lemma \ref{lem:sifBm}. 
\end{proof}

Now we look into implementation issues. For our examples,  we always identify a set of indices $T$ (a subset of $\R^d$ or $\S^d$) to $\{A_t\}_{t \in T}\subset\calA_0$, and write simply from now on
\[
\ccbb{Y_{\alpha,\beta}(t)}_{t \in T} \equiv \ccbb{Y_{\alpha, \beta} (A_t)}_{t \in T},
\]
and similarly for the large-jump and small-jump parts. 
Now the above discussions suggest that the approximated process (in distribution) in simulation is
\equh\label{eq:approximation}
Y_{\alpha,\beta}(t) \approx Y_{\alpha,\beta}^{\epsilon,1}(t) + \sigma_\alpha(\epsilon)\B_\mu^{\beta/2}(t), t\in T.
\eque
While the large-jump part is compound Poisson and the approximated small-jump part is Gaussian, and both classes of stochastic processes in principle have exact simulation methods, computational issues arise quickly if one examines more closely.

For the large-jump part, clearly it suffices to sample the {\em odd-occupancy vector}
\[
\vv D = (D_{t_1},\dots,D_{t_n}) \qmwith D_t:= \inddd{|R_\beta\cap A_t|\ \rm odd},
\]
with a finite index lattice $T = \{t_1,\dots,t_n\}$ in practice.
A straightforward algorithm is the following.
\begin{algo}\label{algo:3}~
\begin{enumerate}
\item Generate a $\beta$-Sibuya random variable $Q_{\beta}$.
\item Sample $R_{\beta} \eqd \bigcup_{i=1}^{Q_{\beta}}\{U_i\}$.
\item Compute $\{D_{t}\}_{t\in T}$ based on the sampling of $R_{\beta}$. 
\end{enumerate}
\end{algo}
In order to sample $Q_\beta$ here,
we recall a nice expression due to \citet{sibuya79generalized}.  Namely, with $G_1$, $G_{\beta}$ and $G_{1-\beta}$ being three independent standard Gamma random variables with parameters $1$, $\beta$ and $1-\beta$, respectively, we have
\equh \label{sibuya1}
Q_{\beta} \eqd 1 + \mbox{Poisson}\pp{\frac{G_1G_{1-\beta}}{G_{\beta}}},
\eque
where the second term on the right-hand side is understood as a Poisson random variable with a random parameter. So in practice we could first sample the random parameter $\Lambda = G_1G_{1-\beta}/G_\beta$ and then a Poisson random variable with parameter $\Lambda$, and  add one to the sampled value at the end.

However, one should realize quickly that this algorithm is not computationally efficient, as the $\beta$-Sibuya distribution  does not have finite $\beta$-th moment \citep{pitman06combinatorial}. This could become quite cumbersome in practice as from time to time $Q_\beta$ may be hundreds of thousands, while the resolution $n$ in $T_n$ is at most a few hundreds. It turns out that for Karlin stable processes and multiparameter fractional stable processes, one can exploit further the structure of $\calA_0$ and sample $\vv D$ directly and much more efficiently, without sampling $Q_\beta$. 
\begin{remark}
In practice one should decide also what value of $\epsilon$ makes a good approximation in \eqref{eq:approximation}. One may choose the value according to the Berry--Esseen bound on the Gaussian approximation (see Remark \ref{rem:BE}), which for the marginal distribution in this case becomes (taking $(S,m) = (\Omega',2^{1-\beta}\mu^\beta(E_0)\cdot   \proba')$ and $f_t(\omega') = D_t(\omega') = \inddd{|R_\beta'(\omega')\cap A_t|\ {\rm odd}}$ as such that with respect to $\proba'$ $D_t'$ is a copy of $D_t$ before)
\[
C_{\rm BE} \frac1{(2^{1-\beta}\mu^\beta(E_0)\esp D_t)^{1/2}}\frac{(2-\alpha)^{3/2}}{(3-\alpha)\sqrt{\alpha C_\alpha}}\epsilon^{\alpha/2}  = C_{\rm BE} \frac1{\mu^{\beta/2}(A_t)}\frac{(2-\alpha)^{3/2}}{(3-\alpha)\sqrt{\alpha C_\alpha}}\epsilon^{\alpha/2},
\]
where we used 
\equh\label{eq:ED_t}
\esp D_t = \proba(R_\beta\cap A_t \ {\rm odd}) = \esp \pp{\frac12\bb{1-\pp{1-2\frac{\mu(A_t)}{\mu(E_0)}}^{Q_\beta}}} = 2^{\beta-1}\frac{\mu^\beta(A_t)}{\mu^\beta(E_0)}.
\eque 
In Figure \ref{fig:epsilon}, the values of $\epsilon = \epsilon_\alpha$ such that
\[
\frac{(2-\alpha)^{3/2}}{(3-\alpha)\sqrt{\alpha C_\alpha}}\epsilon^{\alpha/2} = 0.01
\]
is plotted, along with $C_\alpha$, $\sigma_\epsilon(\alpha)$ and $n_{\alpha,\epsilon} := C_\alpha \epsilon^{-\alpha}$, for $\alpha\in(0,2)$.  Note that $n_{\alpha,\epsilon} = \esp N_{\alpha,\epsilon}/(2^{1-\beta}\mu^\beta(E_0))$  and tells roughly (the terms depending on $\beta$ is dropped for simple comparison) how many independent copies are needed for the large-jump part \eqref{eq:large-jump}.  

From the plot we see that, first, the small-jump part is far from negligible for $\alpha$ close to 2. Second, for $\alpha<1$ the gain of approximating small-jump part is very limited, while the cost of simulating the large-jump part is huge. This is not surprising as it is well known that when $\alpha<1$ the series representation is absolutely summable, and the magnitudes of small jumps decay as $O(j^{-1/\alpha})$.
Therefore, in practice we choose not to apply the small-jump approximation for $\alpha<1$. See examples in Figure \ref{fig:samples} for $\alpha = 0.5$, where we set $\epsilon = 10^{-4}$.  
\end{remark}
\begin{center}
\begin{figure}[ht!]
\showFigure{
\includegraphics[width = .5\textwidth]{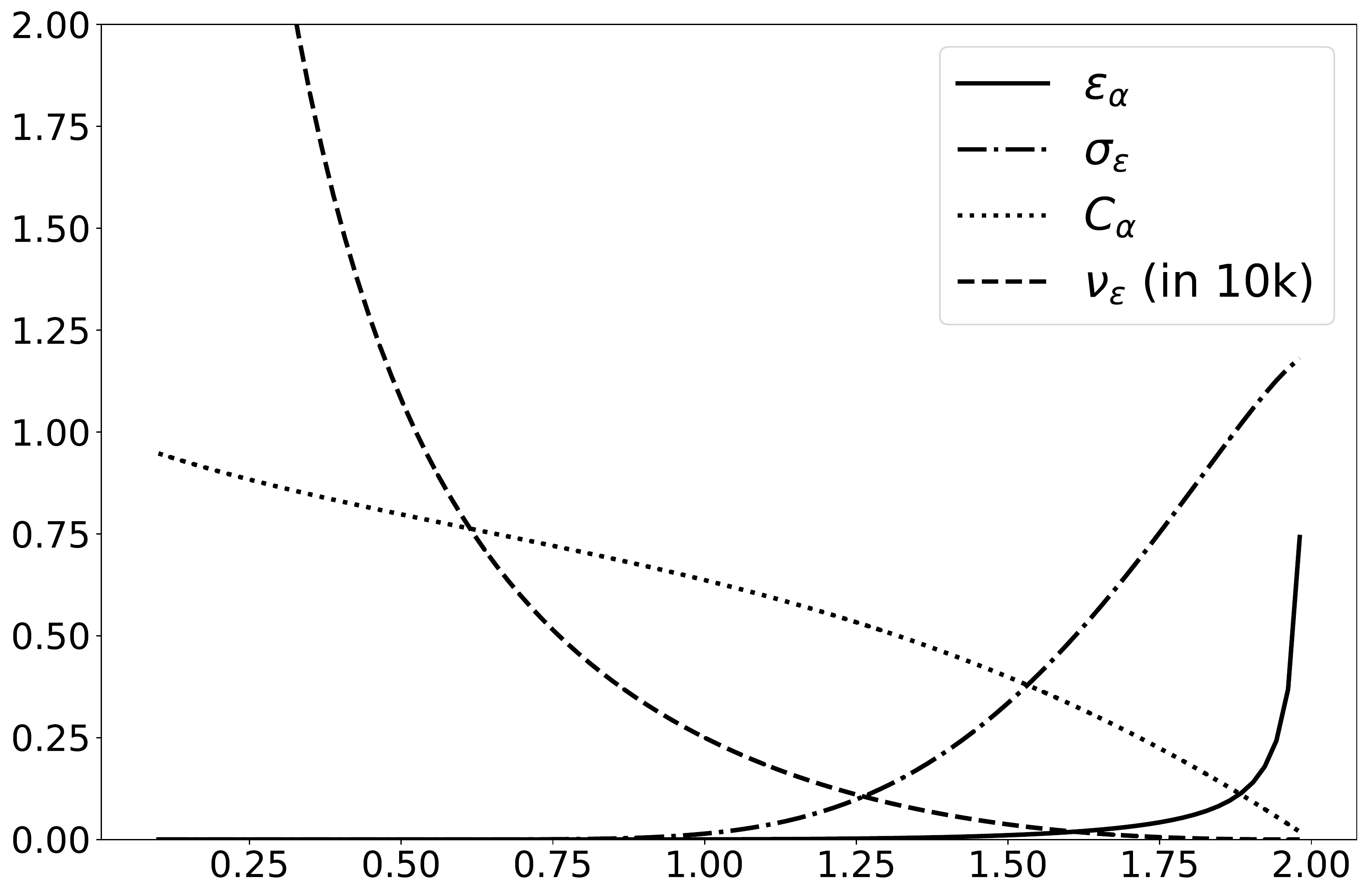}
}
\caption{Comparison of parameters.}\label{fig:epsilon}
\end{figure}
\end{center}
\begin{remark} 
Another numerical issue that we encountered in implementing Algorithm \ref{algo:3} is that, due to the fact that $\Lambda = G_1G_{1-\beta}/G_\beta$ is heavy-tailed, occasionally sampling $\Lambda$ returns a very huge number that forbids the computation to continue (e.g.~in Python on a 64-bit platform, an integer value is no bigger than $2^{63}-1$; the parameter of Poisson of $\Lambda$ can easily go beyond this order during say 1000 i.i.d.~sampling when $\beta<0.2$).  One way to go around this issue is to set up a threshold, say $\lambda_0$, and use ${\rm Poisson}(\Lambda\wedge \lambda_0)$ instead of ${\rm Poisson}(\Lambda)$ in Algorithm \ref{algo:3}.  Then, the probability that the threshold is exceeded at least once (and hence the simulation is only an approximation) is bounded by $\proba(\bigcup_{i=1}^{N_{\alpha,\epsilon}}\{\Lambda_i>\lambda_0\}) \le \esp N_{\alpha,\epsilon}\proba(\Lambda>\lambda_0)$. 
\end{remark}

For the small-jump part, the by-default method of applying the Cholesky decomposition to a covariance matrix of size $n\times n$ is computationally infeasible for high dimensions (with complexity $O(n^3)$, and $\R^2$- or $\S^2$-indexed processes a reasonable resolution requires $n$ to be at least $200^2$). In a few cases, we are in a fortunate situation that the set-indexed fractional Brownian motion is known to have  a fast and exact simulation method. The only exception is the case when it is a multiparameter fractional Brownian motion, for which we develop a fast approximation method. The simulation methods are summarized  in Table \ref{tab:1}.

In the next section we provide details for simulations for a few examples. Table \ref{tab:1} is a summary on where improvement can be made regarding simulation efficiency.
\begin{table}[ht!]
\caption{Summary of simulation methods for examples in Section \ref{sec:examples}. The column `$E$' indicates the underlying space $(E,\calE)$. The column `$\vv D$' indicates whether the odd-occupancy vector can be sampled in an efficient  way without sampling the entire $R_\beta$. The last column indicates the set-indexed fractional Brownian motion that approximates the small-jump part, and the corresponding simulation method. Acronyms used below are, fLCsf: fractional L\'evy--Chenstov stable field; mfsf: multiparameter fractional stable field; 
(m/s)fBm:  (multiparameter/spherical) fractional Brownian motion, fLBf: fractional L\'evy--Brownian field, CEM: circulant embedding method; IEM: intrinsic embedding method.}\label{tab:1}

\begin{tabular}{ |c|c|c|c|c| } 
\hline
Sec. & Example & $E$ &  $\vv D$ & set-indexed fBm \\
\hline
\ref{sec:KSP} & Karlin ($\R_+$-indexed fLCsf)   & $\R_{+}$ & fast & fBm, CEM \citep{wood94simulation,dietrich97fast}\\ 
\ref{sec:mfsf} & mfsf & $\R_{+}^2$ & fast  & mfBm,  Prop.~\ref{prop:small2} \\ 
\ref{sec:fLC} & $\R^2$-indexed fLCsf & $\S\times\R_+$ & slow & fLB, IEM \citep{stein02fast}\\
\ref{sec:sfLC} & $\S^2$-indexed fLCsf & $\S^2$ & slow & sfBm, CEM \citep{cuevas20fast} \\ 
\hline 
\end{tabular}
\end{table}
\section{Examples}\label{sec:examples}
Recall that we work with Karlin random fields $\{Y_{\alpha,\beta}(A_t)\}_{t\in T}$ in \eqref{SetKarlin2}, with a measure space $(E,\calE,\mu)$, $E_0\in\calE$ with $\mu(E_0)<\infty$, and an index set $\{A_t\}_{t\in T}$ such that $A_t \subset E_0$. The four examples summarized in Table \ref{tab:1} are worked out below one by one. 

\subsection{Karlin stable processes}\label{sec:KSP}
This example corresponds to the choice of 
\[
(E,\calE,\mu) \equiv (\R_+,\calB(\R_+),{\rm Leb}), E_0 = [0,1],\mand \{A_t\}_{t\in [0,1]} = \{[0,t]\}_{t\in[0,1]}.
\]

\subsubsection*{The large-jump part} In this case, we introduce an algorithm that improves significantly the efficiency of Algorithm \ref{algo:3} when simulating the odd-occupancy vectors, thanks to the structure of $\{A_t\}_{t\in[0,1]}$. Note that in simulation we only need to work with an index set $T = \{t_1,\dots,t_n\}$ with $0\le t_1<\cdots<t_n\le 1$. 
Let $N_{\Lambda_\beta}$ be a Poisson random variable with a random parameter $\Lambda_\beta: = G_1 G_{1-\beta}/G_{\beta}$, where $G_1$, $G_{\beta}$ and $G_{1-\beta}$ are as in \eqref{sibuya1}. We introduce this time
\[
\wt{R}_{\beta}: = \bigcup_{i=1}^{N_{\Lambda_\beta}} \ccbb{U_i}, 
\]
where $\{U_i\}_{i \in \N}$ are i.i.d.~uniform random variables over $(0,1)$ independent from $N_{\Lambda_\beta}$. Let $U$ be another uniform random variable independent from $\{U_i\}_{i\in\N}$. Define
\equh \label{eq:Mi}
M_i := \sum_{j=1}^{i}B_j + \inddd{U \in (0, t_i]} \qmwith B_i: = \inddd{\abs{\wt{R}_{\beta} \cap (t_{i-1}, t_{i}]}\ \rm odd}, i = 1, \dots, n.
\eque
Then, the Sibuya identity \eqref{sibuya1} says that $N_{\Lambda_\beta} +1\eqd Q_\beta$, and hence
\equh\label{eq:DM}
\ccbb{D_{t_i}}_{i=1, \dots, n} \eqd \ccbb{M_i \mmod 2}_{i=1, \dots, n}.
\eque
The advantage of this representation is that the random vector $\vv M = (M_1,\dots,M_n)$, or essentially $\vv B = (B_1,\dots,B_n)$, can be simulated as a collection of conditionally independent Bernoulli random variables, and hence with linear complexity in $n$ {\em without sampling the heavy-tailed $N_{\Lambda_\beta}$} (see Remark \ref{rem:complexity} below),  thanks to the following simple fact. 

\begin{lemma} \label{lem:B}
With the notations above, given $\Lambda_\beta$,  $\{B_i\}_{i=1, \dots, n}$ are conditionally independent Bernoulli random variables with parameters 
\equh\label{eq:p_i}
p_i(\Lambda_\beta) = \frac12\pp{1-e^{-2(t_i-t_{i-1})\Lambda_\beta}}, i=1,\dots,n.
\eque
\end{lemma}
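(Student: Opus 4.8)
The plan is to condition on $\Lambda_\beta$ and recognize the resulting point configuration as a Poisson process on $(0,1)$, so that the increments $B_i$ become functionals of disjoint regions. Concretely, given $\Lambda_\beta$, the random variable $N_{\Lambda_\beta}$ is Poisson with parameter $\Lambda_\beta$, and the points $U_1,\dots,U_{N_{\Lambda_\beta}}$ are i.i.d.\ uniform on $(0,1)$; by the standard characterization of Poisson point processes, conditionally on $\Lambda_\beta$ the random set $\wt R_\beta$ is a Poisson point process on $(0,1)$ with intensity measure $\Lambda_\beta\,\mathrm{Leb}$. The key structural input is then that the counts $\sabs{\wt R_\beta\cap(t_{i-1},t_i]}$, $i=1,\dots,n$, are (conditionally on $\Lambda_\beta$) independent Poisson random variables with parameters $(t_i-t_{i-1})\Lambda_\beta$, because the intervals $(t_{i-1},t_i]$ are pairwise disjoint and a Poisson process assigns independent counts to disjoint sets.

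From there the computation is routine: $B_i$ is the indicator that a Poisson$((t_i-t_{i-1})\Lambda_\beta)$ variable is odd, and $B_1,\dots,B_n$ inherit conditional independence from the independence of those Poisson counts. It remains only to identify the success probability. For a Poisson random variable $Z$ with parameter $\lambda$,
\[
\proba(Z \text{ odd}) = \summ k0\infty \indd{k\ \mathrm{odd}}e^{-\lambda}\frac{\lambda^k}{k!} = \frac12\pp{e^{-\lambda}\sum_k \frac{\lambda^k}{k!} - e^{-\lambda}\sum_k\frac{(-\lambda)^k}{k!}} = \frac12\pp{1-e^{-2\lambda}},
\]
which with $\lambda = (t_i-t_{i-1})\Lambda_\beta$ gives exactly \eqref{eq:p_i}. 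So I would (i) invoke the Poisson-process representation of $\wt R_\beta$ given $\Lambda_\beta$; (ii) use disjointness of the $(t_{i-1},t_i]$ to get conditionally independent Poisson counts; (iii) record the parity-probability identity above; and (iv) conclude.

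I do not anticipate a genuine obstacle here — the statement is essentially a bookkeeping consequence of the Poisson thinning/independence property, and the only point demanding a line of care is making explicit that we are conditioning on $\Lambda_\beta$ throughout (so that ``independence'' means conditional independence, and the parameters $p_i$ are themselves random through $\Lambda_\beta$). If anything needs emphasis, it is that the argument never touches the heavy-tailed law of $N_{\Lambda_\beta}$ itself: everything is driven by the conditional Poisson structure, which is precisely what makes the subsequent linear-complexity sampling of $\vv B$ (and hence, via \eqref{eq:Mi} and \eqref{eq:DM}, of the odd-occupancy vector) possible.
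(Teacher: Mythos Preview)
Your proposal is correct and follows essentially the same route as the paper: condition on $\Lambda_\beta$ so that $\wt R_\beta$ is a Poisson point process on $(0,1)$ with intensity $\Lambda_\beta$, use independent scattering over the disjoint intervals $(t_{i-1},t_i]$ to get conditional independence of the $B_i$, and compute the parity probability of a Poisson variable. The only difference is that you spell out the generating-function identity for $\proba(Z\ \text{odd})$, whereas the paper simply states it.
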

\begin{proof}
Given $\Lambda_\beta$, $\wt{R}_{\beta}$ is the collection of all points of a Poisson point process on $(0, 1)$ with intensity $\Lambda_\beta$. Then by independent scattering, we have that $\{B_i\}_{i=1, \dots, n}$ are conditionally independent since $\{(t_{i-1}, t_i]\}_{i=1, \dots, n}$ are disjoint. The corresponding parameter of each follows from the fact that, for a Poisson random variable $Z$ with parameter $\lambda>0$, $\proba(Z\ \rm odd) = (1-e^{-2\lambda})/2$. 
\end{proof}
Below is a summary of our improved algorithm for simulating $\vv D$.
\comment{\begin{algorithm} [H]\label{algo:2}
\caption{}
\begin{enumerate}
\item Generate the parameter $\lambda = G_1 G_{1-\beta}/G_{\beta}$ for $N_{\lambda}$.
\item Sample a random variable $U$ uniformly on $(0, 1)$.
\item From Lemma \ref{lem:B}, given $\lambda$, sample random variables $\{B_i\}_{i=1, \dots, n}$.
\item Compute $D_{t_i}= M_i \mmod 2$ for $i=1, \dots, n$ \eqref{eq:Mi}.
\end{enumerate}
\end{algorithm}}
\begin{algo}\label{algo:2}~
\begin{enumerate}
\item Sample $\Lambda_\beta\eqd G_1G_{1-\beta}/G_\beta$.
\item Given $\Lambda_\beta$, sample independent $B_i\sim {\rm Ber}(p_i(\Lambda_\beta)), i=1,\dots,n$ \eqref{eq:p_i}.
\item Sample $U\sim {\rm Unif}(0,1)$.
\item Compute $\vv M$ as in \eqref{eq:Mi} and $\vv D= \vv M \mmod 2$ as in \eqref{eq:DM}.
\end{enumerate}

\end{algo}

\begin{remark}\label{rem:complexity}
Algorithm \ref{algo:3} requires $Q_\beta$ number of exact locations of i.i.d.~random variables $\{U_i\}_{i \in \N}$, and this shall be repeated $N_{\alpha,\epsilon}$ times. The random variable $N_{\alpha,\epsilon}$ is Poisson and hence well concentrated at its mean $2^{1-\beta}\mu^\beta(E_0)C_\alpha\epsilon^{-\alpha}$. Viewing $N_{\alpha,\epsilon}$ as a fixed number for comparison, we see that this requires $\summ i1{N_{\alpha,\epsilon}}Q_{\beta,i}\cdot n$ number of iterations to sample the large-jump part, with $\{Q_{\beta,i}\}_{i\in\N}$ being i.i.d.~copies of $Q_\beta$. By the central limit theorem, we know that $N_{\alpha,\epsilon}^{-1/\beta}\summ i1{N_{\alpha,\epsilon}}Q_{\beta,i}$ has, for $\epsilon>0$ very small, approximately the totally skewed $\beta$-stable distribution (without finite $\beta$-th moment), say $Z_\beta$. So roughly Algorithm \ref{algo:3} has a complexity of order $Z_\beta \cdot N_{\alpha,\epsilon}^{1/\beta}\cdot n$. On the other hand, Algorithm \ref{algo:2} has a complexity of order $N_{\alpha,\epsilon}\cdot  n$, which is much lower.

\end{remark}

\subsubsection*{The small-jump part} In this case, simulating the small-jump part is straightforward, as the set-indexed fractional Brownian motion is $\{\B^{\beta/2}_\mu([0,t])\}_{t\ge 0} \equiv \{\B^{\beta/2}(t)\}_{t\ge0}$ the fractional Brownian motion with Hurst index $\beta/2\in(0,1/2)$ with covariance function as in \eqref{eq:cov_fBm}.
%of which  the covariance function is
%\[\cov\pp{\B^{\beta/2}(s),\B^{\beta/2}(t)} = \frac12\pp{s^\beta+t^\beta-|t-s|^\beta}, s,t\ge0.\]
It is well known that fractional Brownian motions can be simulated in an exact and efficient manner by the circulant embedding method (e.g.~\citep{wood94simulation,dietrich97fast,perrin02fast}).

\subsubsection*{Simulations} In Figure \ref{fig:odd}, we provide a few simulation results for the odd-occupancy vector. In Figure \ref{fig:KSP}, we provide a few simulation results for the Karlin stable processes. The simulations are over a  lattice $\{i/n\}_{i=0,\dots,n}$ with $n = 1000$. 
\begin{figure}
\showFigure{
     \includegraphics[width=.9\textwidth]{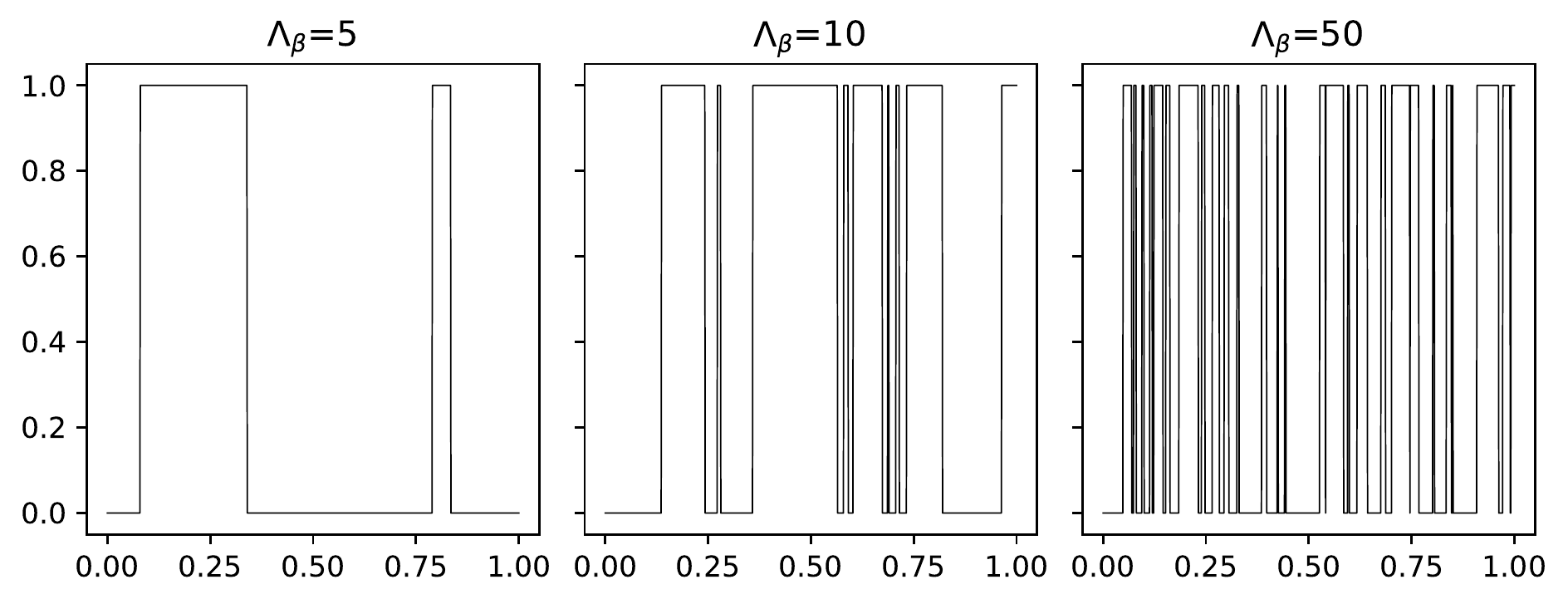}
     }
  \caption{Simulations of odd-occupancy vectors with different $\Lambda_\beta$. }
  \label{fig:odd}
\end{figure}

\begin{figure}[ht!]
\showFigure{
\begin{center}
  \includegraphics[width=.9\textwidth]{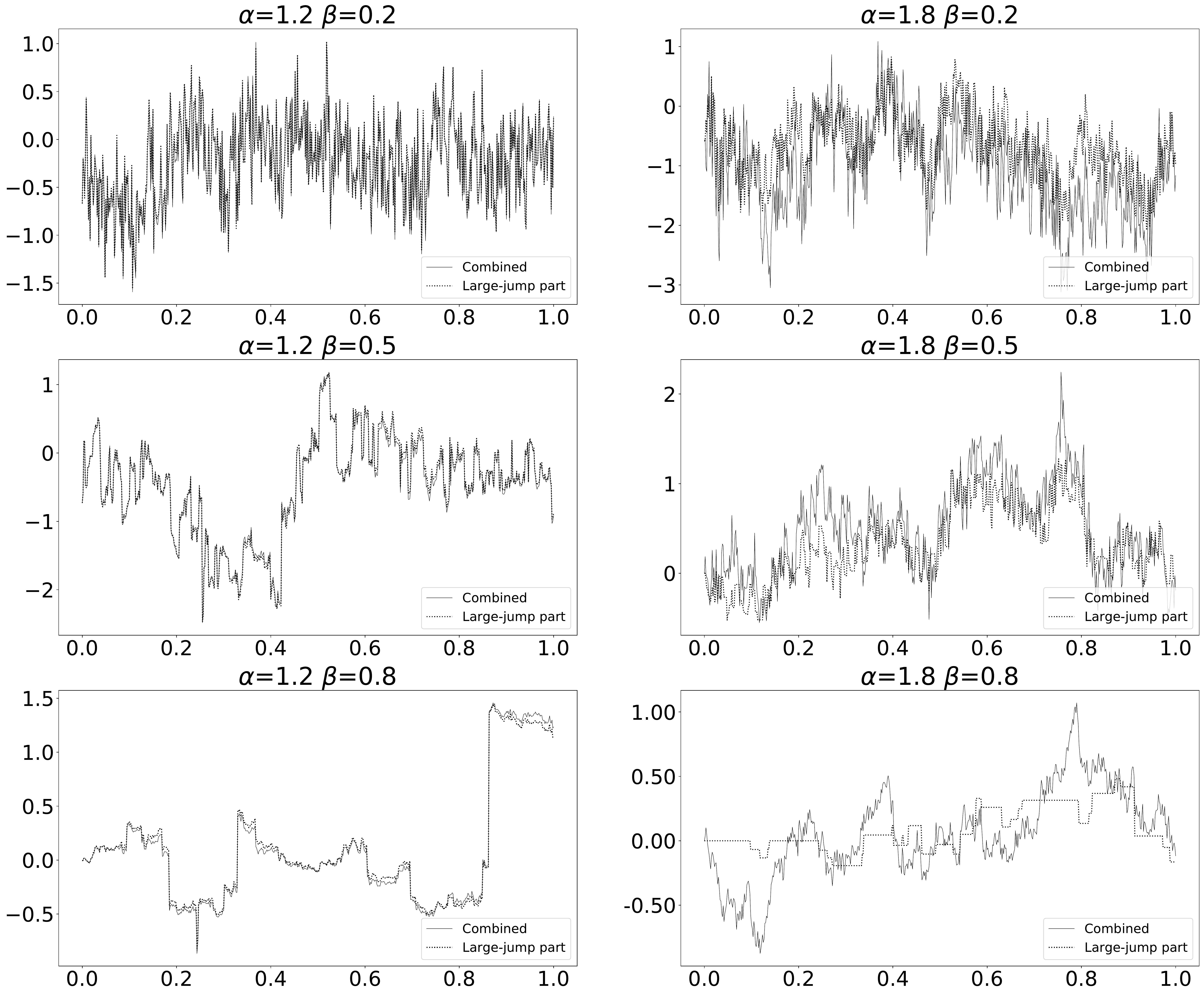}
  \end{center}
}
 \caption{Simulations of Karlin stable processes.}
\label{fig:KSP}
\end{figure}
\subsection{Multiparameter fractional stable fields}\label{sec:mfsf}
In this case, we take 
\equh\label{eq:mfsf_E0}
(E,\calE,\mu) = (\R_+^2,\calB(\R_+^2),{\rm Leb}), E_0 = [\vv0,\vv1], \mand \ccbb{A_\vvt} _{\vvt\in[\vv0,\vv1]} = \ccbb{[\vv0,\vvt]}_{\vvt\in[\vv0,\vv1]}.
\eque
(In this section, $[\vv a,\vv b] = [a_1,b_1]\times[a_2,b_2]$ for $\vv a = (a_1,a_2),\vv b=(b_1,b_2)\in\R_+^2$.)

\subsubsection*{The large-jump part} 
Again, $\{A_\vvt\}_{\vvt\in[\vv0,\vv1]^2}$ has a nice structure that we can exploit to obtain an efficient algorithm for sampling $\vv D$ as in Algorithm \ref{algo:2}. We only present a brief summary below as the proof is the same. This time the index lattice $T$ is given by 
\[
T := \ccbb{\pp{t_i\topp 1,t_j\topp 2}: t_i\topp r\in T\topp r, r=1,2}, \mwith T\topp r: = \ccbb{t\topp r_ i}_{i=1,\dots,n}\subset\R_+, r=1,2.
\]
Again we assume $t\topp r_i$ is increasing in $i$ for $r=1,2$.
This time we want to sample in law the vector $\vv D = \{D_{i,j}\}_{i,j=1,\dots,n}$ with 
\[
D_{i,j} \equiv D_{t_{i}\topp 1, t_{j}\topp 2}: = \inddd{\abs{R_{\beta} \cap [0, t_i\topp1]\times[0, t_{j}\topp2]}\ \rm odd}, i, j=1, \dots, n.
\]
Let $\Lambda_\beta$ be as before (see \eqref{sibuya1}). 
This time introduce
$\{B_{i,j}\}_{i,j=1,\dots,n}$ as conditionally independent Bernoulli random variables, given $\Lambda_\beta$, with parameters 
\equh\label{eq:pij}
p_{i,j}(\Lambda_\beta)=\frac12\pp{1-e^{-2(t_{i}\topp 1-t_{i-1}\topp1)(t_{j}\topp 2-t_{j-1}\topp 2)\Lambda_\beta}}, i,j=1,\dots,n,
\eque
with the convention $t_0\topp r = 0, r=1,2$. Let $\vv U$ be another independent uniform random vector in $(\vv0,\vv1)$. 
Then, with
\equh\label{eq:Mij}
M_{i, j} := \sum_{k=1}^{i}\sum_{\ell=1}^{j}B_{k, \ell} + \inddd{\vv U \in (0, t_i\topp1]\times (0, t_j\topp 2]}, i,j=1, \dots, n,
\eque
by the same argument as in Lemma \ref{lem:B} we have that
\equh \label{eq:DM1}
\ccbb{D_{i, j}}_{i, j=1, \dots, n} \eqd \ccbb{M_{i, j} \mmod 2}_{i,j=1,\dots,n}.
\eque
In summary, we use the following algorithm to sample the odd-occupancy vector $\vv D$ of the multiparameter fractional stable fields. 
\begin{algo}~
\begin{enumerate}
\item Sample $\Lambda_\beta \eqd G_1 G_{1-\beta}/G_{\beta}$.
\item Given $\Lambda_\beta$, sample independent $B_{i, j}\sim {\rm Ber}(p_{i,j}(\Lambda_\beta)), i,j=1,\dots,n$ \eqref{eq:pij}.
\item Sample $\vv U\sim {\rm Unif}(\vv0,\vv1)$.
\item Compute $\vv M$ as in \eqref{eq:Mij} and $\vv D = \vv M$ as in \eqref{eq:DM1}.
\end{enumerate}
\end{algo}
\subsubsection*{The small-jump part}

It turns out that the set-indexed process $\{\B^{\beta/2}_\mu([\vv0,\vvt])\}_{\vvt \in \R_{+}^2}\equiv \{\B^{\beta/2}(\vvt)\}_{\vv t\ge 0}$ becomes the multiparameter fractional Brownian motion \citep{herbin07multiparameter} with covariance function  \eqref{eq:cov_mfBm}.
%\[
%\begin{aligned}
%& \cov (\B^{\beta/2}(\vvs), \B^{\beta/2}(\vvt)) =  \frac12\pp{\mu^\beta([\vv0,\vvs])+\mu^\beta([\vv0,\vvt])-\mu^\beta([\vv0,\vvs]\Delta[\vv0,\vvt]}\\
%& =\frac{1}{2}\ccbb{(s_1s_2)^\beta + (t_1t_2)^\beta - \bb{s_1s_2 + t_1t_2 -2(s_1\wedge t_1)(s_2 \wedge t_2)}^\beta}, \vvt, \vvs \in \R_{+}^2.
%\end{aligned}
%\]
This random field does not have stationary increments, and we are not aware of any exact sampling method that works efficiently with this covariance function. Instead, we propose to apply the following aggregation approximation for simulating the small-jump part. The general idea of aggregation approximation is, instead of applying the deterministic Cholesky decomposition of the given covariance matrix $\Sigma$, to find an easy-to-simulate random vector ($\vv D$ here) so that $\Sigma = \esp (\vv D'^t\vv D)$ (here $\vv D'^t$ is the transpose of $\vv D'$, an independent copy of $\vv D$). Below, recall that in this section we identify $\calA_0 = \{A_\vvt\}_{\vvt\in[\vv0,\vv1]}$. We also keep the factor $\mu(E_0)$ below, although for set-indexed fractional Brownian motion \eqref{eq:mfsf_E0}, $\mu(E_0) = 1$. 
\begin{proposition} \label{prop:small2}
Let $\{\varepsilon_j\}_{j \in \N}$ be a sequence of i.i.d.~standard normal random variables and $\{\vv D_{j}\}_{j \in \N}$ be i.i.d.~copies as in \eqref{eq:large-jump}. Then we have 
\equh \label{multiapprox}
(2^{1-\beta}\mu^\beta(E_0))^{1/2}\cdot\ccbb{\frac{1}{\sqrt m}\sum_{j=1}^m \varepsilon_j D_{j, \vvt}}_{\vvt \in [\vv0,\vv1]} \fddto\ccbb{\B^{\beta/2}(\vvt)}_{\vvt \in [\vv0,\vv1]},
\eque
as $m\to\infty$, with $\B^{\beta/2}$ determined by \eqref{eq:set_indexed}.
\end{proposition}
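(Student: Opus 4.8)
The plan is to verify the finite-dimensional convergence in \eqref{multiapprox} by a classical Lindeberg–Feller central limit theorem for triangular arrays, combined with the covariance identity already established in Lemma \ref{lem:sifBm}. Fix $d\in\N$, points $\vvt_1,\dots,\vvt_d\in[\vv0,\vv1]$ and coefficients $\theta_1,\dots,\theta_d\in\R$, and set $S_m := \sum_{j=1}^m \varepsilon_j \xi_j$ where $\xi_j := \summ k1d \theta_k D_{j,\vvt_k}$. Since $\{\vv D_j\}_{j\in\N}$ are i.i.d.\ and each $D_{j,\vvt_k}\in\{0,1\}$, the summands $\varepsilon_j\xi_j$ are i.i.d., bounded (by $|\theta_1|+\cdots+|\theta_d|$ in absolute value times the standard-normal factor, so with all moments finite), and centered because $\esp\varepsilon_j = 0$ and $\varepsilon_j$ is independent of $\vv D_j$. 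Hence the univariate CLT applies directly to $m^{-1/2}S_m$.

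The key step is the variance computation. By independence of $\varepsilon_j$ and $\vv D_j$,
\[
\var\pp{\frac1{\sqrt m}S_m} = \esp(\xi_1^2) = \summ k1d\summ{\ell}1d \theta_k\theta_\ell\, \esp\pp{D_{1,\vvt_k}D_{1,\vvt_\ell}} = \summ k1d\summ{\ell}1d \theta_k\theta_\ell\,\proba\pp{|R_\beta\cap A_{\vvt_k}|\ {\rm odd},\ |R_\beta\cap A_{\vvt_\ell}|\ {\rm odd}}.
\]
By \eqref{eq:A1A2} this equals $\tfrac1{2^{1-\beta}\mu^\beta(E_0)}\summ k1d\summ\ell1d\theta_k\theta_\ell\cdot\tfrac12\pp{\mu^\beta(A_{\vvt_k})+\mu^\beta(A_{\vvt_\ell})-\mu^\beta(A_{\vvt_k}\Delta A_{\vvt_\ell})}$, which is exactly $\tfrac1{2^{1-\beta}\mu^\beta(E_0)}$ times the variance of $\summ k1d\theta_k\B^{\beta/2}(\vvt_k)$ under the covariance \eqref{eq:set_indexed}. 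Therefore the normalizing factor $(2^{1-\beta}\mu^\beta(E_0))^{1/2}$ on the left-hand side of \eqref{multiapprox} cancels exactly, and the limiting variance matches that of the target Gaussian linear combination. Since the standard CLT gives $m^{-1/2}S_m\weakto N(0,\esp\xi_1^2)$, the Cramér–Wold device then yields the claimed joint convergence to $\{\B^{\beta/2}(\vvt)\}_{\vvt\in[\vv0,\vv1]}$, which is centered Gaussian with the correct covariance.

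I do not expect a genuine obstacle here: the summands are bounded, so the Lindeberg condition is automatic, and boundedness also makes the Cramér–Wold reduction to one dimension immediate (every linear combination $\summ k1d\theta_k \cdot m^{-1/2}\sum_{j=1}^m\varepsilon_j D_{j,\vvt_k}$ is itself of the form $m^{-1/2}\sum_j \varepsilon_j\xi_j$ with bounded i.i.d.\ $\xi_j$). The only point requiring care is bookkeeping with the constant $2^{1-\beta}\mu^\beta(E_0)$ and the factor-of-two convention flagged in the remark after Lemma \ref{lem:sifBm}; I would simply quote \eqref{eq:A1A2} verbatim and track the constant through the variance identity, as above. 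One may optionally note that $\esp D_{1,\vvt} = 2^{\beta-1}\mu^\beta(A_\vvt)/\mu^\beta(E_0)$ from \eqref{eq:ED_t}, though it is not needed for the proof.
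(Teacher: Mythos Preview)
Your proposal is correct and follows essentially the same route as the paper: invoke the (multivariate) central limit theorem for i.i.d.\ summands and identify the limiting covariance via \eqref{eq:A1A2}. The paper's proof is more terse (it just says ``by the multivariate central limit theorem'' and points to \eqref{eq:A1A2}), while you spell out the Cram\'er--Wold reduction and the variance bookkeeping, but the argument is the same.
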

\begin{proof}
By the multivariate central limit theorem, it suffices to compute to the asymptotic covariance of the left hand side of \eqref{multiapprox}. 
That is, for $\vv s,\vv t \in [\vv0,\vv1]$, 
\[
 \cov (D_{ \vvs}, D_{ \vvt}) =  \E(D_{\vvs}D_{\vvt})  =  \P\pp{\abs{R_{\beta} \cap A_{\vvs}}\ {\rm odd}, \abs{R_{\beta} \cap A_{\vvt}}\ {\rm odd}}.
 \]
We have seen this computation in \eqref{eq:A1A2}. 
\end{proof}
Since $|D_\vvt|\le 1$, we have a Berry--Esseen upper bound as $3.3/\sqrt m$ \citep[Theorem 3.4]{chen11normal}. Applying the standard Berry--Esseen bound for the sum of i.i.d.~centered random variables with unit variance \citep{korolev12improvement}, we have  (recall \eqref{eq:ED_t})
\begin{align}
C_{\rm BE}\frac{\esp|D_\vvt|^3}{(\esp|D_\vvt|^2)^{3/2}}m^{-1/2} & = C_{\rm BE}\proba(|R_\beta\cap A_\vvt|\ {\rm odd})^{-1/2}m^{-1/2}\nonumber\\
& = C_{\rm BE}\pp{2^{\beta-1}\frac{\mu^\beta(A_t)}{\mu^\beta(E_0)}}^{-1/2}m^{-1/2},\label{eq:m}
\end{align} as a Berry--Esseen upper bound for the convergence of \eqref{multiapprox}. 
\subsubsection*{Simulations}
\begin{figure}[ht!]
\showFigure{
\begin{center}
   \includegraphics[width=.9\textwidth]{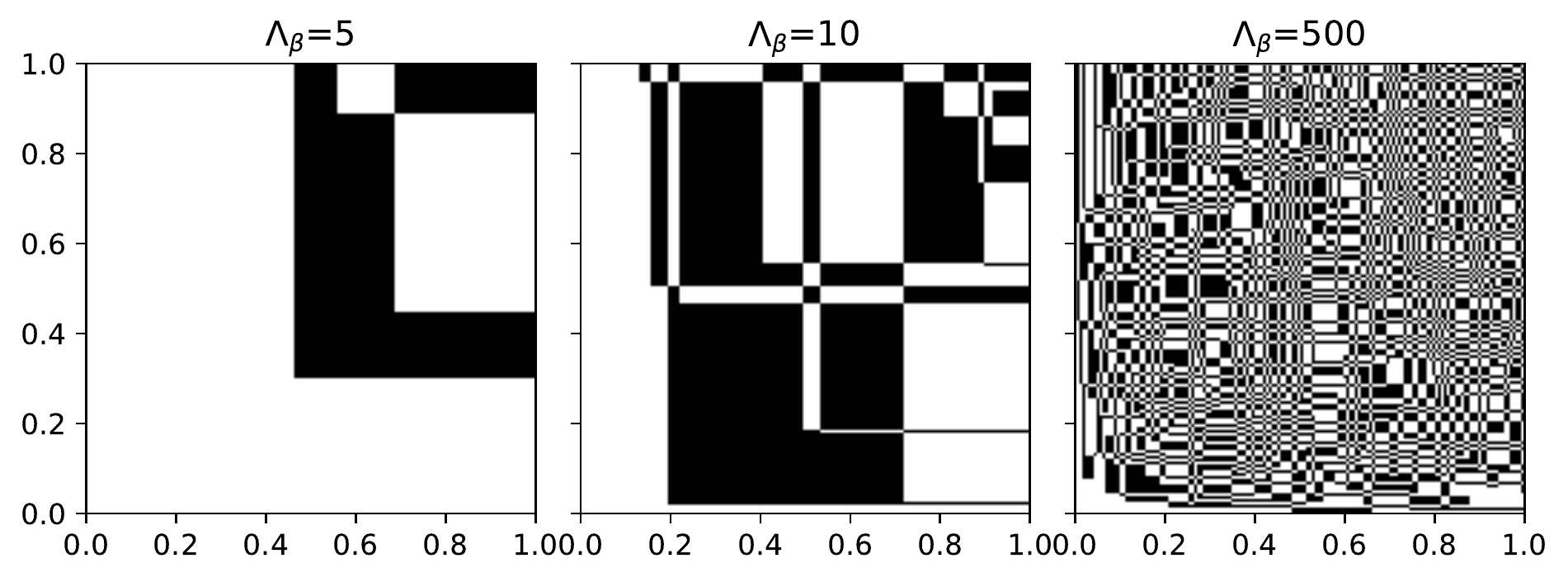}
\end{center}
}
 \caption{Simulations for odd-occupancy vectors for multiparameter stable fields with different values of $\Lambda_\beta$.}
 \label{fig:odd_multi}
\end{figure}
\begin{figure}[ht!]
\showFigure{
\begin{center}
 \includegraphics[width=.9\textwidth]{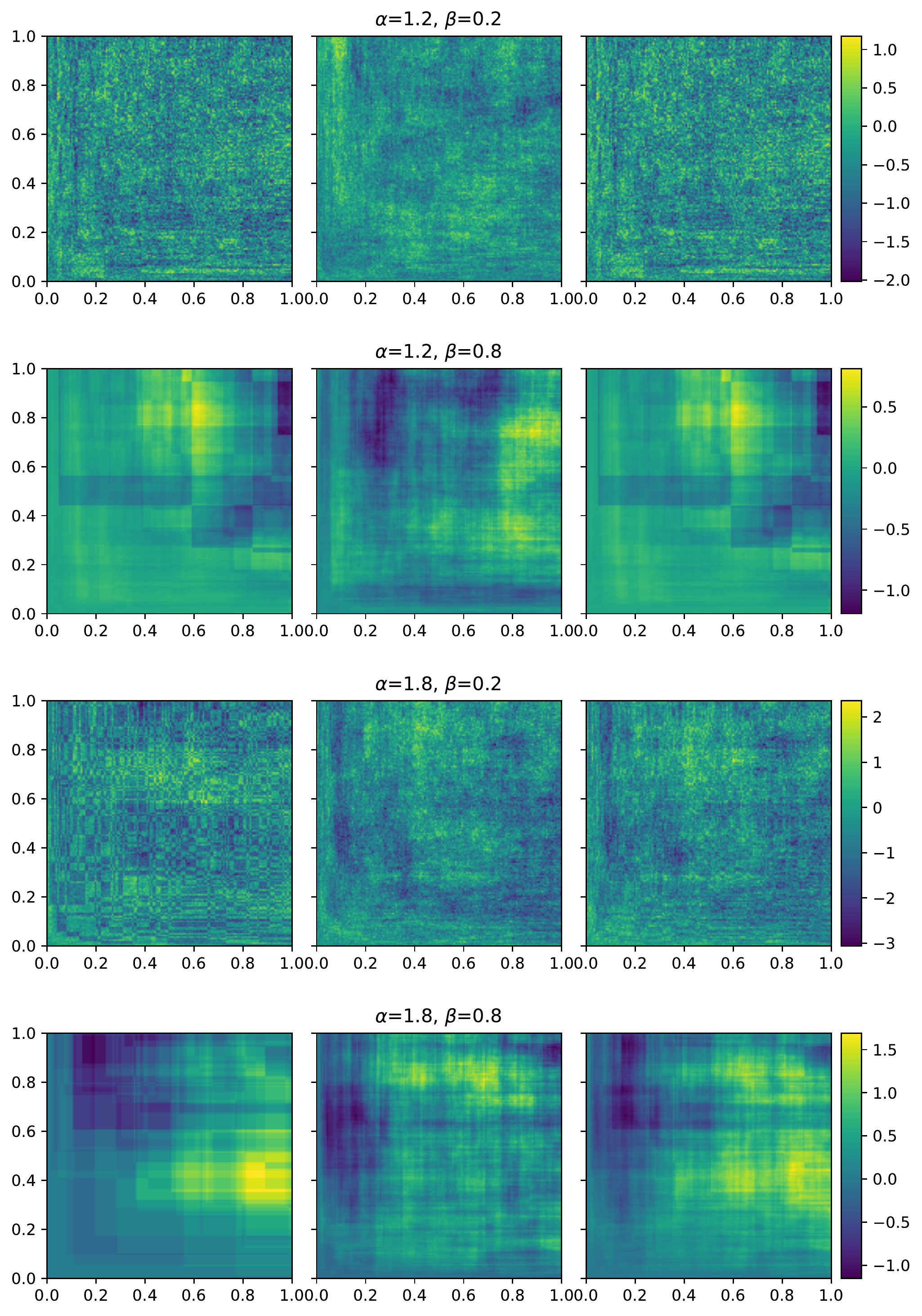}
\end{center}
}
 \caption{Simulations for multiparameter fractional stable fields. From left to right: the large-jump parts, the small-jump parts, and the combined fields.}
 \label{fig:multiparameter}
\end{figure}
Figure \ref{fig:odd_multi} provides a few simulations of the odd-occupancy vectors. 
Figure \ref{fig:multiparameter} provides a few simulations for the multiparameter fractional stable fields. The random field is sampled over a $300\times 300$ lattice. For the small-jump part we take $m=2500$ in Proposition \ref{prop:small2} in view of the Berry--Esseen bound \eqref{eq:m} (so that $m^{-1/2} = 2\%$). 
\subsection{Fractional L\'evy--Chentsov stable fields}\label{sec:fLC}
In this case, we take
\[
(E,\calE,\mu) = \pp{\S^1\times \R_+,\calB(\S^1\times\R_+),d\vv sdr},
\]
where $d\vv sdr$ is the product measure of the uniform measure $d\vv s$ on $\S^1$ and the Lebesgue measure $dr$ on $\R_+$, 
and in practice we may restrict to
\[
E_0 = \S^1\times[0,\sqrt 2] \qmand \{A_\vvt\}_{\vvt\in[\vv0,\vv1]}= \ccbb{(\vvs, r): \vvs \in \S^1, 0 < r < \aa{\vvs, \vvt}}_{\vvt\in[\vv0,\vv1]},
\]
with $\mu(E_0) = \sqrt 2 \cdot 2\pi$. (Actually, one could further restrict to $([0,\pi]\cup[3\pi/2,2\pi))\times[0,1]\subset E_0$ to gain some extra computational efficiency.)
In this case, $\{\B^{\beta/2}_\mu(A_\vvt)\}_{\vvt \in[\vv0,\vv1]}\equiv\{\B^{\beta/2}(\vvt)\}_{\vvt\in\in[\vv0,\vv1]}$ becomes a fractional L\'evy Brownian field, a centered Gaussian random field with covariance function \eqref{eq:cov_fLBf}.
%\[\cov \pp{\B^{\beta/2}(\vvt), \B^{\beta/2}(\vvs)} = \frac{1}{2} \pp{\norm{\vvt}^{\beta} + \norm{\vvs}^{\beta} - \norm{\vvt-\vvs}^{\beta}}, \vvt, \vvs \in \R^2.\]

\subsubsection*{The large-jump part} The nice lattice structure of $\{A_\vvt\}$ in the previous two examples is lost here, and it seems that we have to reply on Algorithm \ref{algo:3} to sample the large-jump part, which is computationally inefficient. 
\subsubsection*{The small-jump part}
It is well known that the intrinsic embedding method by \citet{stein02fast} can be applied to simulate exactly and efficiently the fractional L\'evy Brownian fields. 
\subsubsection*{Simulations}
\begin{figure}[ht!]
\showFigure{
\begin{center}
 \includegraphics[width=.9\textwidth]{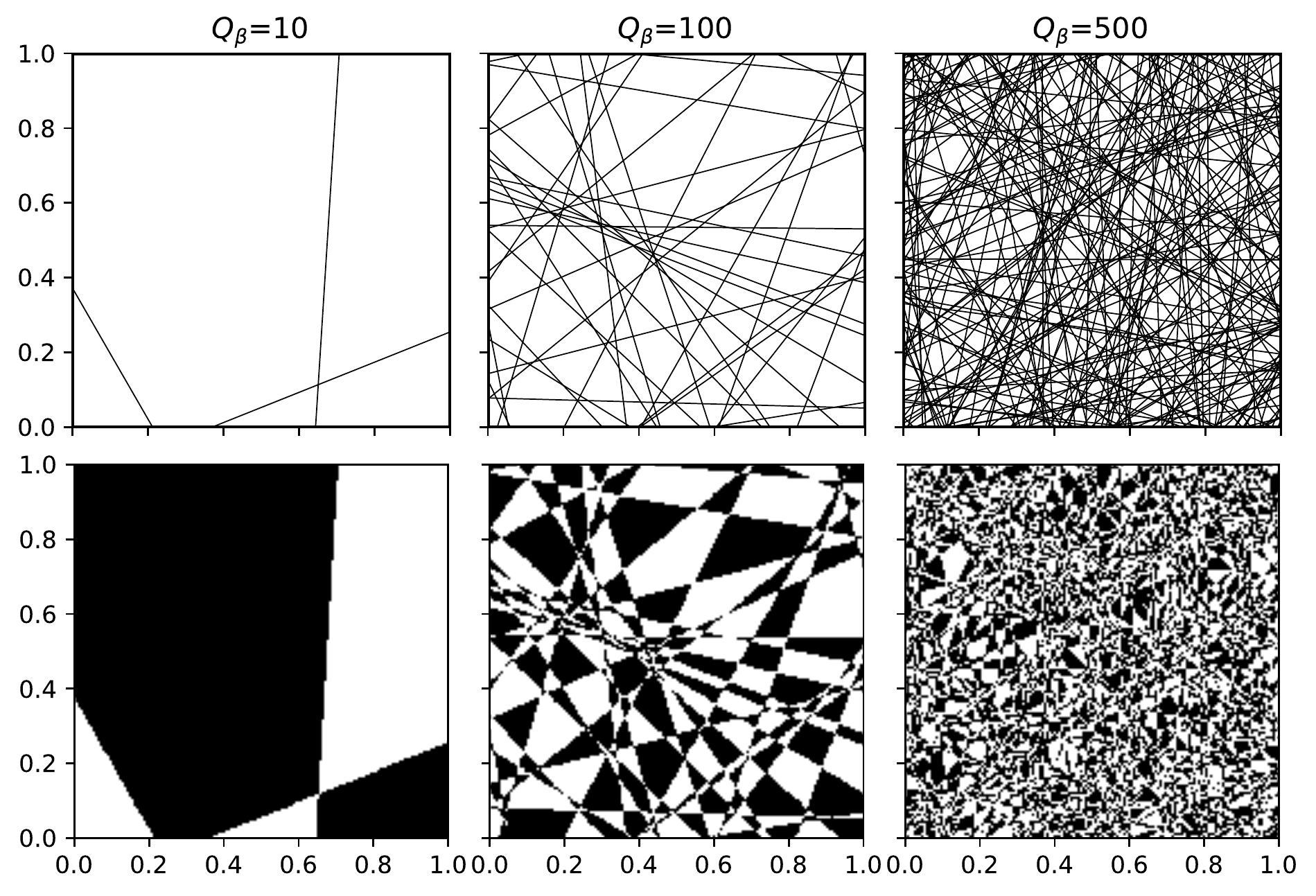}
   \end{center}
}
 \caption{Simulations for odd-occupancy vectors for fractional L\'evy--Chentsov stable fields with different $Q_\beta$. The plots in first row are i.i.d.~$Q_\beta$ hyperplanes (some may not intersect the region $[0,1]^2$), and the plots in the second row are the corresponding odd-occupancy vectors over a $300\times300$ lattice.}
 \label{fig:odd_fLC}
\end{figure}
\begin{figure}[ht!]
\showFigure{
\begin{center}
 \includegraphics[width=.9\textwidth]{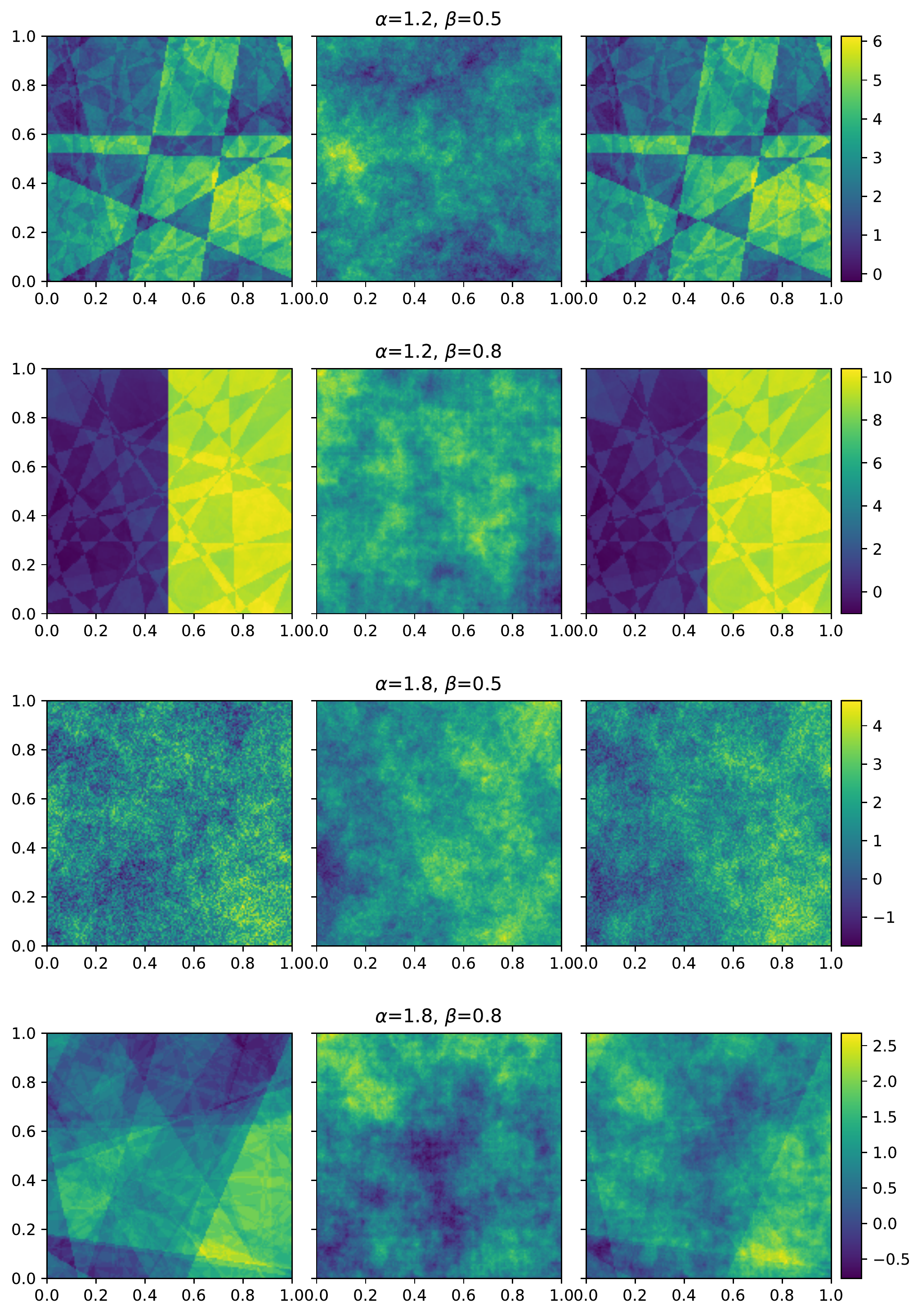}
\end{center}
}
 \caption{Simulations for fractional L\'evy--Chentsov stable fields.  From left to right: the large-jump parts, the small-jump parts, and the combined fields.}
 \label{fig:fLC}
\end{figure}
Figure \ref{fig:odd_fLC} provides a few simulations for the odd-occupancy vectors for the fractional L\'evy--Chentsov stable fields. 
Figure \ref{fig:fLC} provides a few simulations  for the fractional L\'evy--Chentsov stable fields. The random fields are sampled over a $300\times 300$ lattice.

\subsection{Spherical fractional L\'evy--Chenstov stable fields}\label{sec:sfLC}
In this case, we take
\[
(E,\calE,\mu) = (\S^2,\calB(\S^2),d \vvs), E_0 = E,
\]
where $d\vvs$ is the Lebesgue measure on the unit sphere $\S^2$ in $\R^3$, and
\[
A_{\vv x}=H_{\vv x} \triangle H_{\vv o}, \vv x \in \S^2 \qmwith H_{\vv x}: = \ccbb{\vv y \in \S^2: \langle \vv x, \vv y\rangle > 0},
\]
where $\vv o\in \S^2$ is the fixed north pole, and $H_{\vv x}$ is the hemisphere of $\S^2$ determined by $\vv x$. 
The spherical fractional L\'evy--Chentsov stable field, denoted by $\{Y_{\alpha,\beta}(\vv x)\}_{\vv x \in\S^2}\equiv \{Y_{\alpha,\beta}(A_{\vvx})\}_{\vvt\in\S^2}$, can be obtained by
\equh\label{eq:pin_down}
\ccbb{Y_{\alpha,\beta}(\vvx)}_{\vvx\in\S^2} \eqd\ccbb{\wt Y_{\alpha,\beta}(\vvx) - \wt Y_{\alpha,\beta}(\vv o)}_{\vvx\in\S^2}, \qmwith \wt Y_{\alpha,\beta}(\vvx) := \wt Y_{\alpha,\beta}(H_\vvx), \vvx\in\S^2.
\eque
The random field $\{\wt Y_{\alpha,\beta}(\vvx)\}_{\vvx\in\S^2}$ is again a special case of Karlin random fields. In addition, it is rotationally stationary (a.k.a.~strongly isotropic), and the discussions below are for  $\wt Y_{\alpha,\beta}$ instead of $Y_{\alpha,\beta}$. 

 \subsubsection*{The large-jump part} We rely on Algorithm \ref{algo:3} to simulate the large-jump part. 
 \subsubsection*{The small-jump part}
 An advantage of working with $\wt Y_{\alpha,\beta}$ instead of $Y_{\alpha,\beta}$ is that now, Proposition \ref{prop:small1} says that the small-jump part is approximated by a rotationally stationary spherical Gaussian field, denoted by  $\{\wt \B^{\beta/2}(\vv x)\}_{\vvx\in\S^2}$. Thanks to the rotational stationarity, such Gaussian random fields  can be simulated fast and exactly by the circulant embedding method \citep{cuevas20fast}. 

It remains to compute the covariance explicitly. 
 In view of Proposition \ref{prop:small1}, $\wt \B^{\beta/2}$ is a set-indexed fractional Brownian motion with the same law as $Y_{2,2H}(H_{\vv x})$ (see \eqref{eq:integral}), where $H_{\vv x}$ is the hemisphere determined by $\vv x\in \S^2$ and $\mu$ the Lebesgue measure on $\S^2$ so that $\mu(H_{\vv x}) = 2\pi$ and $\mu(H_{\vv x}\Delta H_{\vv y}) = 4\dd(\vv x,\vv y)$. Therefore,  we have
\begin{align*}
\cov\pp{\wt \B^{\beta/2}(\vv x),\wt \B^{\beta/2}(\vv y)} &= \cov(Y_{2,2H}(H_{\vv x}),Y_{2,2H}(H_{\vv y})) \\
&= \frac12\pp{\mu^{2H}(H_{\vv x}) +\mu^{2H}(H_{\vv y}) - \mu^{2H}(H_{\vv x}\Delta H_{\vv y})}\\
& = (2\pi)^{2H}\pp{1-\frac12\pp{\frac2\pi}^{2H}\dd^{2H} (\vv x,\vv y)}.
\end{align*} 
\begin{figure}[ht!]
\showFigure{
\begin{center}
 \includegraphics[width=.9\textwidth]{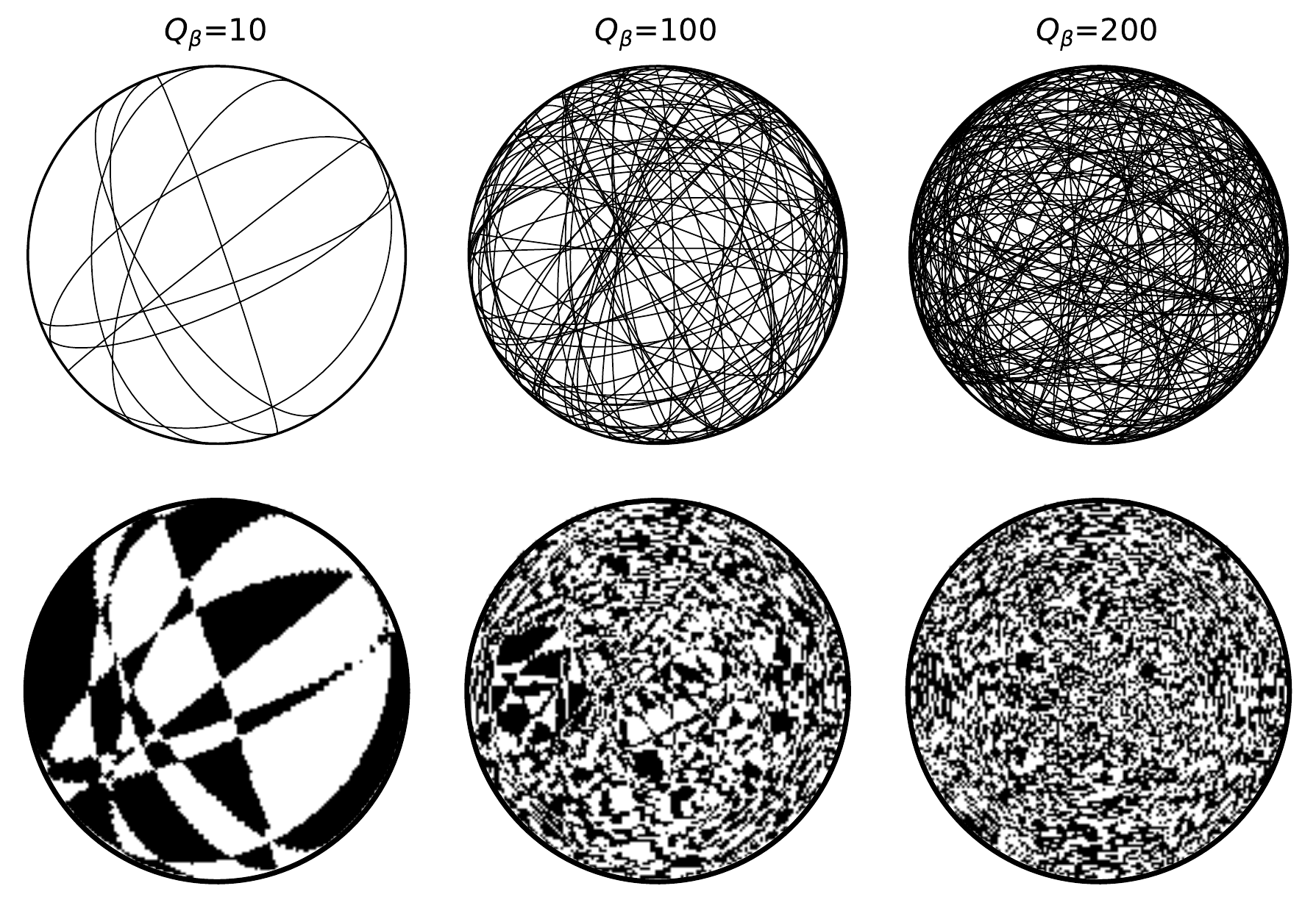}
   \end{center}
}
 \caption{Simulations for odd-occupancy vectors for spherical fractional L\'evy--Chentsov stable fields for different $Q_\beta$. 
 The plots in first row are the great circles corresponding to i.i.d.~$Q_\beta$ points from the sphere, and the plots in the second row are the corresponding odd-occupancy vectors over a $300\times150$ lattice in polar coordinates. }
 \label{fig:odd_sfLC}
\end{figure}
\begin{figure}[ht!]
\showFigure{
\begin{center}
 \includegraphics[width=.9\textwidth]{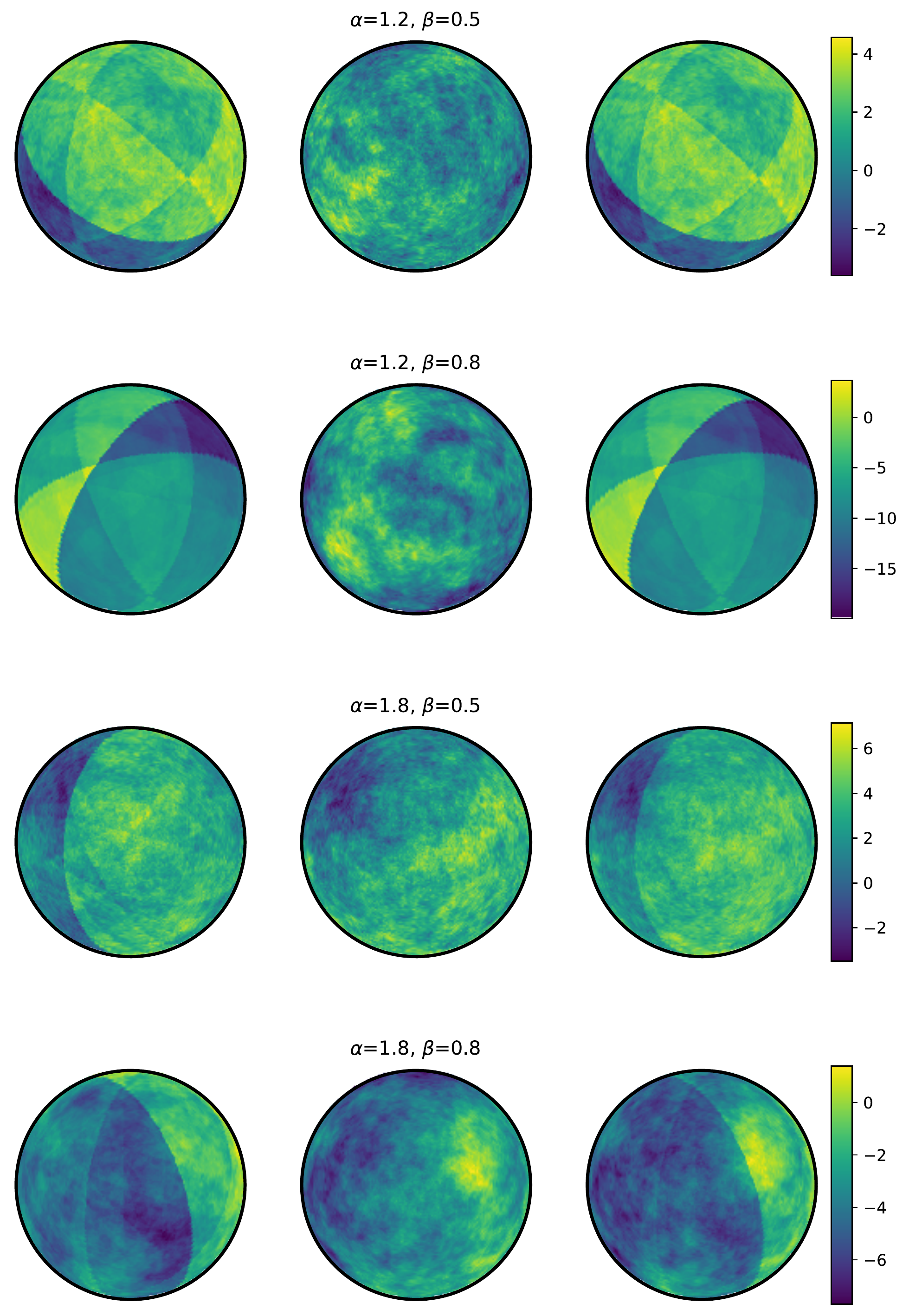}
\end{center}
}
 \caption{Simulations for rotationally stationary spherical fractional L\'evy--Chentsov stable fields.  From left to right: the large-jump parts, the small-jump parts, and the combined fields.
 }
 \label{fig:sfLC}
\end{figure}
\subsubsection*{Simulations}
Figure \ref{fig:odd_sfLC} provides a few simulations for the odd-occupancy vectors for spherical fractional L\'evy--Chentsov fields. 
Figure \ref{fig:sfLC} provides a few simulations for the  spherical fractional L\'evy--Chentsov fields. The spherical random fields are sampled over a $300\times 150$ lattice in the polar coordinates. 
For simulation examples of $Y_{\alpha,\beta}$, see Figure \ref{fig:samples}, where we sampled the approximated $\wt Y_{\alpha,\beta}$ first and applied the pinning-down relation \eqref{eq:pin_down}.

\subsection*{Acknowledgement}
The authors would like to thank an anonymous referee for very helpful and constructive comments.  YW would like to thank Yimin Xiao for helpful discussions on spherical fractional Brownian motions. ZF and YW's research were partially supported by Army Research Office grant W911NF-17-1-0006.

\appendix
\section{A general framework for simulating stable processes}\label{sec:general}
The framework here can be read from \citep{cohen08general} where an essentially more general one for infinitely-divisible processes is explained in details. We only focus on a subclass of $S\alpha$S processes, of which the task is significantly simplified (see Remark \ref{rem:Lacaux}).  Namely, for some measurable space $(S,\calS)$ equipped with a {\em finite measure} $m$ and a family of {\em square integrable functions} $\{f_t\}_{t\in T}$ on $(S,m)$, we are interested in simulating S$\alpha$S processes defined as
\equh \label{seriesrep}
X(t):=\sum_{j \in \N} \eta_{\alpha,j} f_t(W_j), t\in T, \alpha \in (0, 2), 
\eque
where 
\[
\ccbb{\pp{\eta_{\alpha,j},W_j}}_{j\in\N} \sim \PPP\pp{\frac{\alpha C_\alpha}2|y|^{-\alpha-1}dydm}.
\]
\begin{remark}%\label{rem:markedPPP}
Alternatively, the above can be viewed as a Poisson point process with i.i.d.~marks, with $\{\eta_{\alpha,j}\}_{j\in\N}\sim \PPP((1/2)C_\alpha m(S) \alpha |y|^{-\alpha-1}dy)$ on $\wb\R\setminus\{0\}$ and $\{W_j\}_{j \in \N}$ as i.i.d.~random elements in $S$ with law $m(\cdot)/m(S)$, two families being independent. This representation is helpful for some analysis of the stable processes, but is not needed in our proofs. 
\end{remark}
The definition \eqref{seriesrep} has the following stochastic-integral representation
\equh \label{integralrep}
\ccbb{X(t)}_{t \in T} \eqd \ccbb{\int_{S} f_{t}(s) M_{\alpha}(ds)}_{t \in T}, \alpha \in (0, 2), 
\eque
where $M_{\alpha}$ is an S$\alpha$S random measure on $(S, \calS)$ with control measure $m$ \citep[Corollary 3.10.4]{samorodnitsky94stable}.  
In general, the representations of stable processes, in particular the choices of $(S,m)$, are not unique, and a good choice may increase significantly the efficiency of simulation method. 

It is well known that, when $\alpha \in (0, 2)$, there are no exact simulation methods for most S$\alpha$S processes. In the seminal work of \citet{asmussen01approximations}, it was pointed out that in simulations, the S$\alpha$S process should be decomposed into the large-jump and small-jump parts, and then the two parts could be simulated independently. Namely, let $\epsilon > 0$, in view of \eqref{seriesrep}, the process $\{X(t)\}_{t \in T}$ can be written as the sum of two independent processes
\[
X(t) = X_{\epsilon, 1}(t) + X_{\epsilon, 2}(t),
\]
with $X_{\epsilon,1}$ and $X_{\epsilon,2}$ given by
\[
X_{\epsilon,1}(t) := \sif n1 \eta_{\alpha,n}f_t(W_n)\inddd{\eta_{\alpha,n}\ge \epsilon},\mand 
X_{\epsilon,2}(t) := \sif n1 \eta_{\alpha,n}f_t(W_n)\inddd{\eta_{\alpha,n}< \epsilon}.
\]

The two processes are referred as the {\em large-jump} and the {\em small-jump} parts, respectively from now on.
For the large-jump part,  thanks to our assumption that $m$ is finite on $(S,\calS)$, it is immediately seen that $X_{\epsilon,1}$ has a compound-Poisson representation as
\equh\label{eq:compound}
\ccbb{X_{\epsilon,1}}_{t\in T} \eqd \ccbb{\summ j1{N_{\alpha,\epsilon}}V_{\alpha,\epsilon,j}f_t(W_j)}_{t\in T},
\eque
where $N_\epsilon$ is a Poisson random variable with parameter $C_\alpha m(S) \epsilon^{-\alpha}$, $W_j$ are as before, $V_{\alpha,\epsilon,j}$ has probability density $(1/2)\epsilon^\alpha\alpha |y|^{-\alpha-1}, |y|>\epsilon$, and all random variables are independent. An exact simulation of $X_{\epsilon,1}$ in view of \eqref{eq:compound} is straightforward.

The small-jump part $\{X_{\epsilon, 2}(t)\}_{t \in T}$  is an infinitely-divisible process that can be approximated by a Gaussian process, as summarized in the following proposition. The proof is essentially the same as  \citet[Theorem 2.1]{asmussen01approximations}; see also \citep[Lemma 4.1]{lacaux04series} and \citep[Proposition 5.1]{cohen08general}. For the sake of completeness we include a proof here again. 
Let $\nu_\alpha(dx)$ denote the L\'evy measure   for standard S$\alpha$S distribution \[
\nu_\alpha(dx) := \frac{\alpha C_\alpha}2 |x|^{-1-\alpha}dx, x\ne 0.
\]
Introduce 
\[
\sigma_\alpha(\epsilon) := \pp{\int_{-\epsilon}^{\epsilon}v^2 \nu_\alpha(dv)}^{1/2} = \pp{\alpha C_\alpha \int_{0}^{\epsilon}v^{1-\alpha} dv}^{1/2} = \pp{ \frac{\alpha C_\alpha}{2-\alpha}}^{1/2}\epsilon^{1-\alpha/2}.
\]
\begin{proposition}\label{prop:small}
Assume that $f_t\in L^2(S,m)$ for all $t\in T$. Then 
\[
\ccbb{\frac{X_{\epsilon, 2}(t)}{\sigma_\alpha(\epsilon)}}_{t \in T} \stackrel{f.d.d.}\weakto \ccbb{\mathbb G(t)}_{t \in T},
\]
as $\epsilon\downarrow0$,
where $\{\G(t)\}_{t \in S}$ is a centered Gaussian process with  covariance function
\[
\cov (\G(t_1), \G(t_2)) = \int_{S} f_{t_1}(s)f_{t_2}(s) m(ds), t_1, t_2 \in T.
\]
\end{proposition}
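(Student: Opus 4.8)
The plan is to verify convergence of finite-dimensional distributions via characteristic functions, using the classical Asmussen--Rosi\'nski argument that a compound-Poisson-type infinitely-divisible vector with small jumps, suitably normalized, is asymptotically Gaussian because its L\'evy measure concentrates near the origin. Fix $d\in\N$, indices $t_1,\dots,t_d\in T$ and $\theta_1,\dots,\theta_d\in\R$, and set $\Phi(s) := \summ k1d \theta_k f_{t_k}(s)$, a square-integrable function on $(S,m)$. First I would write down the log-characteristic function of the linear combination $\summ k1d \theta_k X_{\epsilon,2}(t_k)$ from the Poisson structure \eqref{seriesrep}: it equals $\int_S \int_{|y|<\epsilon}\pp{e^{iy\Phi(s)}-1}\nu_\alpha(dy)m(ds)$, where the term $-iy\Phi(s)$ may be dropped by symmetry of $\nu_\alpha$ (or kept; it integrates to zero). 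Dividing the process by $\sigma_\alpha(\epsilon)$ replaces $\Phi$ by $\Phi/\sigma_\alpha(\epsilon)$, so the exponent becomes $\int_S\int_{|y|<\epsilon}\pp{e^{iy\Phi(s)/\sigma_\alpha(\epsilon)}-1}\nu_\alpha(dy)m(ds)$.

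The core estimate is the pointwise-in-$s$ Taylor expansion $e^{iu}-1 = iu - u^2/2 + O(|u|^3\wedge u^2)$ applied with $u = y\Phi(s)/\sigma_\alpha(\epsilon)$. The linear term vanishes after integrating the symmetric $\nu_\alpha$ over $\{|y|<\epsilon\}$. The quadratic term contributes $-\tfrac12\Phi(s)^2\sigma_\alpha(\epsilon)^{-2}\int_{|y|<\epsilon}y^2\nu_\alpha(dy) = -\tfrac12\Phi(s)^2$ exactly, by the very definition of $\sigma_\alpha(\epsilon)^2 = \int_{-\epsilon}^{\epsilon}v^2\nu_\alpha(dv)$. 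Integrating over $s$ then gives $-\tfrac12\int_S\Phi(s)^2 m(ds) = -\tfrac12\sum_{k,\ell}\theta_k\theta_\ell\int_S f_{t_k}f_{t_\ell}\,dm$, which is precisely the exponent of the characteristic function of the claimed Gaussian vector $(\G(t_1),\dots,\G(t_d))$. So it remains to show the remainder vanishes as $\epsilon\downarrow0$.

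For the remainder I would bound $|e^{iu}-1-iu+u^2/2| \le C(|u|^3\wedge u^2)$; after integrating in $y$ over $\{|y|<\epsilon\}$ and using $|\Phi(s)/\sigma_\alpha(\epsilon)|\cdot\epsilon$ as the natural small parameter, the error is controlled by something like $\sigma_\alpha(\epsilon)^{-3}\int_S|\Phi(s)|^3 m(ds)\int_{|y|<\epsilon}|y|^3\nu_\alpha(dy)$ on the region where $|u|\le 1$, plus a tail term where $|u|>1$ handled by the $u^2$ bound. Using $\int_{|y|<\epsilon}|y|^3\nu_\alpha(dy) = \tfrac{\alpha C_\alpha}{2(3-\alpha)}\epsilon^{3-\alpha}$ and $\sigma_\alpha(\epsilon)^2 \asymp \epsilon^{2-\alpha}$, one gets $\sigma_\alpha(\epsilon)^{-3}\epsilon^{3-\alpha}\asymp \epsilon^{\alpha/2}\to0$. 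A subtlety is that $\Phi$ need only be in $L^2$, not $L^3$, so the naive $|u|^3$ bound is not globally integrable in $s$; the fix is to split the $s$-integral at $|\Phi(s)| \le \sigma_\alpha(\epsilon)/\epsilon$ versus $|\Phi(s)| > \sigma_\alpha(\epsilon)/\epsilon$ (equivalently, split the inner bound by $|u|\le1$ vs.\ $|u|>1$), use the cubic bound on the first piece and the quadratic bound $|e^{iu}-1-iu+u^2/2|\le u^2$ on the second, and invoke dominated convergence: on the second piece $u^2\cdot\indd{|u|>1}\le \Phi(s)^2 y^2/\sigma_\alpha(\epsilon)^2$ with the indicator $\indd{|\Phi(s)|>\sigma_\alpha(\epsilon)/\epsilon}\to 0$ pointwise since $\sigma_\alpha(\epsilon)/\epsilon\to\infty$, and is dominated by $\Phi(s)^2$ after integrating $y$, which is $m$-integrable. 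This dominated-convergence argument for the $L^2$-only case is the main obstacle; once it is in place, L\'evy's continuity theorem upgrades convergence of characteristic functions to convergence of finite-dimensional distributions and the proof is complete.

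\begin{proof}
Fix $d\in\N$, $t_1,\dots,t_d\in T$, $\theta_1,\dots,\theta_d\in\R$, and set $\Phi := \summ k1d\theta_k f_{t_k}\in L^2(S,m)$. By the L\'evy--Khintchine formula applied to the infinitely-divisible vector coming from the restriction of the Poisson point process in \eqref{seriesrep} to $\{|y|<\epsilon\}$, and the symmetry of $\nu_\alpha$,
\[
\log\esp\exp\pp{i\frac{1}{\sigma_\alpha(\epsilon)}\summ k1d\theta_k X_{\epsilon,2}(t_k)} = \int_S\int_{|y|<\epsilon}\pp{e^{iy\Phi(s)/\sigma_\alpha(\epsilon)}-1-\frac{iy\Phi(s)}{\sigma_\alpha(\epsilon)}}\nu_\alpha(dy)\,m(ds),
\]
where we have freely inserted the linear compensator since it integrates to zero in $y$. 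Write $g(u) := e^{iu}-1-iu$, so that $g(u) = -u^2/2 + h(u)$ with $|h(u)|\le C(|u|^3\wedge u^2)$ for a universal constant $C$. The quadratic part gives, using $\int_{|y|<\epsilon}y^2\nu_\alpha(dy) = \sigma_\alpha(\epsilon)^2$ and Fubini,
\[
-\frac12\int_S \Phi(s)^2\,m(ds) = -\frac12\summ k1d\summ \ell1d \theta_k\theta_\ell\int_S f_{t_k}(s)f_{t_\ell}(s)\,m(ds),
\]
which is exactly $\log\esp\exp\pp{i\summ k1d\theta_k\G(t_k)}$ for the stated Gaussian process. It remains to show that
\[
r(\epsilon) := \int_S\int_{|y|<\epsilon} h\!\pp{\frac{y\Phi(s)}{\sigma_\alpha(\epsilon)}}\nu_\alpha(dy)\,m(ds) \longrightarrow 0 \qquad \text{as } \epsilon\downarrow0.
\]
Fix $s$ and split the inner integral according to $|y\Phi(s)/\sigma_\alpha(\epsilon)|\le 1$ or $>1$. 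On the first region, $|h(y\Phi(s)/\sigma_\alpha(\epsilon))|\le C|y|^3|\Phi(s)|^3\sigma_\alpha(\epsilon)^{-3}$, and since $\int_{|y|<\epsilon}|y|^3\nu_\alpha(dy) = \frac{\alpha C_\alpha}{2(3-\alpha)}\epsilon^{3-\alpha}$ while $\sigma_\alpha(\epsilon)^2 = \frac{\alpha C_\alpha}{2-\alpha}\epsilon^{2-\alpha}$, this part is bounded by
\[
C'\,\epsilon^{\alpha/2}\int_S |\Phi(s)|^3 \inddd{|\Phi(s)|\le \sigma_\alpha(\epsilon)/\epsilon}\,m(ds) \le C'\,\epsilon^{\alpha/2}\cdot\frac{\sigma_\alpha(\epsilon)}{\epsilon}\int_S \Phi(s)^2\,m(ds) = C''\,\epsilon^{\alpha/2}\,\snn\Phi_{L^2}^2\to0.
\]
On the second region, $|h(y\Phi(s)/\sigma_\alpha(\epsilon))|\le y^2\Phi(s)^2\sigma_\alpha(\epsilon)^{-2}$, so integrating in $y$ over $\{|y|<\epsilon\}$ this part is at most $\Phi(s)^2\inddd{|\Phi(s)|>\sigma_\alpha(\epsilon)/\epsilon}$; since $\sigma_\alpha(\epsilon)/\epsilon\to\infty$, the integrand tends to $0$ for $m$-a.e.\ $s$ and is dominated by $\Phi^2\in L^1(S,m)$, so by dominated convergence this contribution also tends to $0$. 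Hence $r(\epsilon)\to0$, the log-characteristic function converges to that of the Gaussian vector, and the conclusion follows from L\'evy's continuity theorem.
\end{proof}
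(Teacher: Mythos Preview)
Your overall strategy---compute the log-characteristic function, separate the quadratic term, and show the remainder vanishes---is the same as the paper's, but your remainder estimate contains an arithmetic slip that leaves a genuine gap. In your bound for ``region~1'' you write
\[
C'\,\epsilon^{\alpha/2}\cdot\frac{\sigma_\alpha(\epsilon)}{\epsilon}\int_S \Phi(s)^2\,m(ds) = C''\,\epsilon^{\alpha/2}\,\snn\Phi_{L^2}^2,
\]
but since $\sigma_\alpha(\epsilon)/\epsilon = (\alpha C_\alpha/(2-\alpha))^{1/2}\epsilon^{-\alpha/2}$, the product $\epsilon^{\alpha/2}\cdot\sigma_\alpha(\epsilon)/\epsilon$ is a \emph{constant}, not $O(\epsilon^{\alpha/2})$. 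So your bound only shows this piece is bounded, not that it tends to zero. (There is also a minor inconsistency: splitting the inner integral by $|u|\le1$ does not by itself produce the indicator $\inddd{|\Phi(s)|\le\sigma_\alpha(\epsilon)/\epsilon}$; region~1 is nonempty for every $s$.)

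The fix is to drop the attempt at a uniform-in-$s$ rate and use dominated convergence on the \emph{entire} remainder, which is exactly what the paper does (and more simply: the paper never splits off $-u^2/2$). Since $|h(u)|\le C u^2$ always, one has for every $s$
\[
\abs{\int_{|y|<\epsilon} h\!\pp{\frac{y\Phi(s)}{\sigma_\alpha(\epsilon)}}\nu_\alpha(dy)} \le C\,\Phi(s)^2\cdot\frac{1}{\sigma_\alpha(\epsilon)^2}\int_{|y|<\epsilon}y^2\nu_\alpha(dy) = C\,\Phi(s)^2 \in L^1(S,m),
\]
and for each fixed $s$ the inner integral tends to $0$ because $\sup_{|y|<\epsilon}|y\Phi(s)/\sigma_\alpha(\epsilon)|\le \epsilon|\Phi(s)|/\sigma_\alpha(\epsilon)\to0$ and $h(u)=o(u^2)$ as $u\to0$. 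Dominated convergence then gives $r(\epsilon)\to0$. The paper packages this as $|I_{\alpha,\epsilon}(y)|\le y^2/2$ together with $I_{\alpha,\epsilon}(y)\to -y^2/2$, which is the cleanest way to write it.
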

The tightness of the sequence $\{X_{\varepsilon,2}\}_{\epsilon>0}$ was also established in a few earlier investigated cases \citep{asmussen01approximations,lacaux04series}. 
Note that the Gaussian process $\G$ that arises in the limit shares the same form of integral representations as the original S$\alpha$S process $X$, with the S$\alpha$S random measure replaced by a Gaussian random measure ($\alpha = 2$).

\begin{remark}\label{rem:Lacaux}
Most examples of interest in  \citep{lacaux04series,lacaux04real,cohen08general} are such that $S = \Rd$ equipped with the control measure $m$ being the Lebesgue measure. Then, the large-jump part does not have compound--Poisson representation; it is known as a shot-noise model over $\Rd$ in the literature \citep{vervaat79stochastic}. Simulating of shot-noise models requires another approximation,  with key ideas from \citep{rosinski01series}. On the other hand, the treatment for approximation the small-jump part remains the same for different choices of $(S,m)$. 
From this point of view, working with a generic $(S,m)$ instead of $(\Rd,{\rm Leb})$ as in earlier references does not bring new technical challenges in analysis immediately: choosing $m$ to be finite on $S$ even simplifies our task. 

It is worth noting that the assumption on the finiteness on $m$ is not essential, as one could also apply a change-of-measure trick to work with a different representation satisfying this property. The essential constraint here is the $L^2$-integrability of the functions $f_t$ (after change of measure) that is needed for the Gaussian approximations of the small-jump part (for \eqref{seriesrep} to be a well defined S$\alpha$S process it suffices to have $f_t\in L^\alpha$ in general). 
Another notable example of S$\alpha$S processes that fits into the framework presented here is the one recently introduced in \citep{owada15functional}, where $S$ takes a more abstract space than $\Rd$.
\end{remark}

\begin{proof}[Proof of Proposition \ref{prop:small}]
We start by providing some background on infinitely-divisble processes. 
As an infinitely-divisible process, by \citep[Theorems 3.3.2 and 3.4.3]{samorodnitsky16stochastic}, \eqref{integralrep} also can be written as the following integral representation
\equh\label{eq:stochastic_integral}
\ccbb{X(t)}_{t \in T} \eqd \ccbb{\int_{S} f_t(s)M^{id}_{\alpha}(ds)}_{t \in T},
\eque
where $M^{id}_{\alpha}$ is an infinitely-divisible random measure on $S$ with control measure $dm$, and $M^{id}_\alpha$ is uniquely determined by {\em local characteristics} $\sigma^2 \equiv 0, b\equiv 0, \rho(s,\cdot) = \nu_\alpha(\cdot)$ \citep[P.86]{samorodnitsky16stochastic}.
(The infinitely-divisible random variable $X(t)$ has L\'evy measure on $\R$ as the push-forward measure 
\equh\label{eq:Levy}
\mu_{f_t} := (m\times \nu_\alpha)\circ T_{f_t}\inv \qmwith T_{f_t}(s,x) := xf_t(s), s\in S,x\in\R,
\eque
see \citep[Theorem 3.3.2]{samorodnitsky16stochastic}, although we do not gain anything in this proof by using $\mu_{f_t}$.)

We shall understand stochastic-integral representations as in \eqref{eq:stochastic_integral} via their corresponding characteristic functions of finite-dimensional distributions based on local characteristics, 
namely with $\summ j1d \theta_jf_{t_j}(s) \equiv g(s)$, 
\equh\label{eq:LK}
\esp \exp\pp{i\summ j1d \theta_jX_{t_j}}  = \exp\pp{\int_S\int_\R \pp{e^{i g(s)x}-1-ig(s)\dbb x}\nu_\alpha(dx)m(ds)},
\eque
where
\[
\dbb x  = \begin{cases}
x & |x|\le 1,\\
-1 & x<1,\\
1 & x\ge 1.
\end{cases}
\]
Then, $X_{\epsilon,2}$ has the similar integral representation as \eqref{eq:stochastic_integral} with $M^{id}_\alpha$ modified by replacing the L\'evy measure $\nu_\alpha$ by the truncated measure $\inddd{|v| < \epsilon}\nu_\alpha(dv)$. 
Now we consider for $d \in \N$, $\vvt=(t_1, \dots, t_d) \in T^d$ and $\vv\theta=(\theta_1, \dots, \theta_d) \in \R^d$,
\[
g_{\vv\theta,\vv t}(s): = \sum_{j=1}^{d} \theta_j f_{t_j}(s).
\]
Then the characteristic function of finite-dimensional distribution of $X_{\epsilon, 2}(t)$ is given by (thanks to the symmetry of $\nu_{\alpha}$, replacying $g(s)\dbb x$ in \eqref{eq:LK} by $g(s)x\inddd{|g(s)x|\le 1}$)
\[
 \E \exp\pp{i \frac{\summ j1d\theta_jX_{\epsilon,2}(t_j)}{\sigma_\alpha(\epsilon)}}  = \exp\pp{\int_S I_{\alpha,\epsilon}(g_{\vv\theta,\vv t}(s))m(ds)},
 \]
with
\begin{align*}
I_{\alpha,\epsilon}(y) & := \int_{\R} \pp{\exp\pp{i\frac{yx}{\sigma_\alpha(\epsilon)}} - 1 - i\frac{y}{\sigma_\alpha(\epsilon)} \dbb x}\inddd{|x|\le\epsilon} \nu_{\alpha}(dx)\\
& = \int_{-\epsilon}^\epsilon\pp{ \exp\pp{i\frac{yx}{\sigma_\alpha(\epsilon)}} - 1 - i\frac{yx}{\sigma_\alpha(\epsilon)} }\nu_{\alpha}(dx),
\end{align*}
where we dropped $\dbb x$ on the right-hand side of first line above thanks to the symmetry of $\nu_\alpha$. 
Now, since $\sigma_\alpha(\epsilon)/\epsilon\to\infty$ as $\epsilon\downarrow 0$, we have
\[
I_{\alpha,\epsilon}(y)  \sim \int_{-\epsilon}^\epsilon -\frac{y^2x^2}{2\sigma_\alpha(\epsilon)^2}\nu_\alpha(dx) = -\frac{y^2}2 \frac{\int_{-\epsilon}^\epsilon x^2\nu_\alpha(dx)}{\sigma_\alpha(\epsilon)^2} = -\frac{y^2}2.
\]
In addition, for all $y \in \R$, 
$\abs{I_{\alpha,\epsilon}(y)} \leq y^2/2$ (since $|e^{ix}-1-ix|\le x^2/2$). Therefore by the dominate convergence theorem we have 
\begin{align*}
\lim_{\epsilon \downarrow 0}  \E \exp\pp{i \frac{\summ j1d\theta_jX_{\epsilon,2}(t_j)}{\sigma_\alpha(\epsilon)}}  & = \lim_{\epsilon \downarrow 0}  \E \exp\pp{\int_S I_{\alpha,\epsilon}(g_{\vv\theta,\vv t}(s))m(ds)}\\
& = \exp \pp{-\frac{1}{2}\int_{S} \abs{g_{\vv\theta,\vv t}(s)}^2 m(ds)}.
\end{align*}
Now, we read the right-hand side as the the characteristic function of $\summ j1d \theta_j\G(t_j)$, which completes the proof.
\end{proof}
So for the small-jump part, in practice we shall pick a small number $\epsilon>0$ and  apply the approximation
\[
\ccbb{X_{\epsilon,2} (t)}_{t \in T} \approx \ccbb{\sigma_\alpha(\epsilon) \mathbb G(t)}_{t \in T},
\] 
for the corresponding Gaussian process in Proposition \ref{prop:small}. The replacement of small-jump part by a Gaussian process is crucial in view of numerical analysis. For example, for L\'evy-driven stochastic differential equations, the performance of approximation schemes is much better with the Gaussian approximation than simply neglecting all the small jumps. See \citep{fournier11simulation} and references therein for a detailed investigation.
\begin{remark}\label{rem:BE}
As in earlier results, one could also have an Berry--Esseen bound on the pointwise approximation, thanks to \citep[Theorem 3.1]{asmussen01approximations}, \citep[Lemma 4.1]{lacaux04series}: letting 
\[
s^2_\epsilon(t) = \esp X_{\varepsilon,2}^2(t) = \int_S |f_t|^2dm \int_{-\epsilon}^\epsilon x^2\nu_\alpha(dx) = \var(\G(t))\sigma_\alpha^2(\epsilon),
\] 
we have immediately the following rate for the convergence in Proposition \ref{prop:small} (recall the L\'evy measure in \eqref{eq:Levy})
\begin{multline*}
\sup_{x\in\R}\abs{\proba\pp{\frac{X_{\varepsilon,2}}{\sigma_\alpha(\epsilon)}\le x} - \proba(\G(t)\le x)} \\
\le C_{\rm BE} \frac{\int_S|f_t|^3dm\int_{-\epsilon}^\epsilon |x|^3\nu_\alpha(dx)}{s_\epsilon^3(t)}
\le C_{\rm BE} \frac{\int_S|f_t|^3dm}{(\int_S|f_t|^2dm)^{3/2}}\frac{(2-\alpha)^{3/2}}{(3-\alpha)\sqrt{\alpha C_\alpha}}\epsilon^{\alpha/2}, 
\end{multline*}
where $C_{\rm BE}$ is the constant in standard Berry--Esseen upper bound for partial sum of centered i.i.d.~random variables with unit variance. The value $C_{\rm BE} = 0.7975$ was used in the aforementioned references, and this value has been improved to 0.4785 in \citep{korolev12improvement}. 
\end{remark}

\bibliographystyle{apalike}

\bibliography{references,references18}
%\bibliography{../../include/references,../../include/references18}

\end{document}